\theoremstyle{plain}
\newtheorem{theorem}{Theorem}[section]
\newtheorem{proposition}[theorem]{Proposition}
\newtheorem{lemma}[theorem]{Lemma}
\newtheorem{corollary}[theorem]{Corollary}
\theoremstyle{definition}
\newtheorem{example}[theorem]{Example}
\newcommand{\bx}{\mathbf{x}}
\newcommand{\by}{\mathbf{y}}
\newcommand{\bc}{\mathbf{c}}
\newcommand{\bg}{\mathbf{g}}
\newcommand{\bv}{\mathbf{v}}
\newcommand{\bw}{\mathbf{w}}
\begin{document}
\title{Bounded hyperbolic components of bicritical rational maps}
\author{Hongming Nie and Kevin M. Pilgrim}
\address{Einstein Institute of Mathematics, The Hebrew University of Jerusalem}
\email{hongming.nie@mail.huji.ac.il}
\address{Department of Mathematics, Indiana University}
 \email{pilgrim@indiana.edu}
\date{\today}
\maketitle

\begin{abstract}
We prove that the hyperbolic components of bicritical rational maps having two distinct attracting cycles each of period at least two are bounded in the moduli space of bicritical rational maps. Our arguments rely on arithmetic methods.
\end{abstract}


\section{Introduction}
A degree $d\ge 2$ rational map $f: \mathbb{P}^1 \to \mathbb{P}^1$ defines a dynamical system on the complex projective line.  The moduli space $\mathrm{rat}_d:=\mathrm{Rat}_d/\mathrm{Aut}(\mathbb{P}^1)$ is the space of holomorphic conjugacy classes of degree $d$ rational maps, and is naturally an affine variety \cite[Theorem 4.36]{Silverman07}. A rational map is \textit{hyperbolic} if each critical point converges under iteration to an attracting cycle; this property is invariant under conjugacy. The set of hyperbolic maps is an open and conjecturally dense subset of $\mathrm{rat}_d$. A connected component $\mathcal{H}$ of this subset is called a \textit{hyperbolic component}.  We are interested in the general question of when a hyperbolic component $\mathcal{H}$ has compact closure in $\mathrm{rat}_d$. In particular, we are interested in the analogous question for natural dynamically defined subvarieties of $\mathrm{rat}_d$.

A rational map is \textit{bicritical} if it has exactly two critical points. Equivalently, $f=M\circ z^d \circ N$ for $M, N\in \mathrm{Aut}(\mathbb{P}^1)$.  Such maps were studied by Milnor \cite{Milnor00}.
In \cite[Corollary 1.3]{Milnor00}, he showed that the moduli space $\mathcal{M}_d\subset\mathrm{rat}_d$ of bicritical rational maps is biholomorphic to $\mathbb{C}^2$. Explicit coordinates are given by $(\sigma_1, \sigma_d)$ where the $\sigma_i$'s are the elementary symmetric functions of the multipliers of the $d+1$ fixed-points \cite[Remark 2.7]{Milnor00}. It follows that the inclusion $\mathcal{M}_d \hookrightarrow \mathrm{rat}_d$ is proper.  Hyperbolic components in $\mathcal{M}_d$ are classified as in the quadratic case \cite{Milnor93}.  A hyperbolic component $\mathcal{H}\subset\mathcal{M}_d$ is of \textit{type D} if each map in $\mathcal{H}$ has two distinct attracting cycles; for convenience, we say it  is of \textit{strict type D} if neither of the attracting cycles is  a fixed point. The properness of the embedding $\mathcal{M}_d \hookrightarrow \mathrm{rat}_d$ implies that $\mathcal{H}$ has compact closure in $\mathcal{M}_d$ if and only if its image has compact closure in $\mathrm{rat}_d$.

Our main result is the following, which extends Epstein's boundedness result for strict type $D$ quadratic rational maps \cite[Theorem 1]{Epstein00}.
\begin{theorem}\label{main}
Let $\mathcal{H}\subset\mathcal{M}_d$ be a strict type D hyperbolic component. Then $\mathcal{H}$ has compact closure in $\mathcal{M}_d$.
\end{theorem}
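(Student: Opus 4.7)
The plan is to argue by contradiction, converting an Archimedean divergence in $\mathcal{M}_d$ into a non-Archimedean degeneration and analyzing its Berkovich rescaling limits. Assume for contradiction that $\mathcal{H}$ is unbounded. Since $\mathcal{M}_d\cong\mathbb{C}^2$ via the coordinates $(\sigma_1,\sigma_d)$, there is a sequence $(f_n)\subset\mathcal{H}$ with $(\sigma_1(f_n),\sigma_d(f_n))\to\infty$. Using openness of $\mathcal{H}$ and density of $\overline{\mathbb{Q}}$-points in $\mathcal{M}_d\cong\mathbb{C}^2$, I would first replace $(f_n)$ by a sequence of maps defined over a number field that still lies in $\mathcal{H}$ and still diverges. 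The two attracting cycles then remain algebraic, with Archimedean multipliers in the open unit disk and constant periods $p_1,p_2\geq 2$ along the sequence.

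\textbf{Non-Archimedean model.} Next I would embed the divergent sequence into a one-parameter algebraic family in $\mathcal{M}_d$ accumulating at infinity and reinterpret it as a single rational map $f$ over a Puiseux series field $\mathbb{L}=\mathbb{C}(\!(t)\!)$ (or its algebraic closure), equipped with its canonical non-Archimedean absolute value. Escape in $\mathcal{M}_d$ now corresponds to bad reduction of $f$, so $f$ acts non-trivially on the Berkovich projective line $\mathbb{P}^{1,\mathrm{an}}_\mathbb{L}$. I would then invoke the theory of rescaling limits of degenerating rational maps to extract finitely many lower-degree rational limits $g_1,\dots,g_k$ that capture the dynamics at the distinct scales of the degeneration. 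Each attracting cycle of $f_n$ is algebraic and persists in the family, and because its period is $\geq 2$ and its multiplier is a Möbius-invariant, it cannot be hosted by a degree-one rescaling limit (whose cycles of period $\geq 2$ are necessarily neutral). Hence each attracting cycle must survive as a genuine periodic cycle of some $g_i$ of degree $\geq 2$.

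\textbf{Closing with the bicritical constraint.} The main obstacle is to exploit the bicritical hypothesis to rule out the resulting configurations. Because $f$ has only two critical points, Riemann--Hurwitz severely limits the total critical mass available among the $g_i$, and each attracting cycle of $g_i$ must lie in the basin of an inherited critical point. Using the normal form $f_n=M_n\circ z^d\circ N_n$ and classifying how the automorphisms $M_n,N_n$ can degenerate in $\mathrm{Aut}(\mathbb{P}^1)$, I would enumerate the admissible distributions of the two critical points and the two cycles among the $g_i$, and show in each case that the configuration is incompatible with the strict type D data; in particular, the case in which a single $g_i$ absorbs both cycles should recur to a lower-complexity instance (yielding induction on degree or an explicit obstruction in the bicritical normal form), while the case of two distinct $g_i$ forces a splitting of critical mass that contradicts the bicritical Riemann--Hurwitz accounting. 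The heart of the argument is this case analysis of Berkovich rescaling limits of bicritical families, and it is here that the arithmetic machinery does the essential work.
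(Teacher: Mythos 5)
Your overall architecture (contradiction, non-Archimedean reinterpretation of the degeneration, rescaling limits, case analysis exploiting bicriticality) matches the paper's, but three steps you treat as routine are in fact where the real work lies, and as stated two of them fail. First, the passage from ``$\mathcal{H}$ unbounded'' to ``a one-parameter algebraic family degenerating at $t=0$ with $f_{t_k}\in\mathcal{H}$ for some $t_k\to 0$'' is not automatic: a hyperbolic component is a transcendental object in general, and the paper explicitly notes that for type C components it does not know how to perform this step. What makes it work for type D is that the two marked attracting cycles give an isomorphism $\widetilde{\mathcal{H}}\to\mathbb{D}\times\mathbb{D}$ via multipliers, so $\widetilde{\mathcal{H}}$ is \emph{semi-algebraic}, and the Curve Selection Lemma then produces a real-analytic curve into the boundary point which one complexifies. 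Your substitute --- choosing number-field points in $\mathcal{H}$ and ``embedding the sequence into a one-parameter algebraic family'' --- does not explain why the resulting family meets $\mathcal{H}$ along a sequence of parameters tending to the degenerate one.

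Second, your claim that each attracting cycle of period $\geq 2$ ``must survive as a genuine periodic cycle of some $g_i$ of degree $\geq 2$'' is false, and the failure modes are precisely the hard cases. The limit $\Gamma$ of such a cycle can collapse entirely to the multiple fixed point $\hat z$ of the rescaling limit $G$, or can contain a critical point of $G$ that is strictly preperiodic to $\hat z$, with the ``surviving cycle'' alternative being only one of three possibilities (Lemma \ref{limit-infinity}). Third, the endgame cannot be closed by Riemann--Hurwitz bookkeeping of ``critical mass'' alone: the rescaling limit $G$ is itself a degree $d$ bicritical map with a parabolic fixed point at $\hat z$ that already consumes a critical orbit, and the contradictions come from (i) Rivera-Letelier's count of classical fixed points inside the fixed Rivera domain (Theorem \ref{fp-number}), which kills the collapsing case, and (ii) the Fatou--Shishikura inequality together with Epstein's refined version, which force the two limiting nonrepelling cycles to collide into a parabolic-attracting or parabolic-indifferent cycle that the refined FSI then forbids. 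No induction on degree occurs. Without these specific arithmetic and critical-orbit-counting inputs, your case analysis does not terminate in a contradiction.
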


In rough outline, our arguments and Epstein's are similar. Suppose to the contrary that there exists a degenerating sequence $f_1, f_2, \ldots$ of rational maps in $\mathcal{H}$. Since the sequence lies in a single hyperbolic component $\mathcal{H}$, there is a period $n$ such that each $f_k$ has an attracting cycle of period $n$. In particular, the multiplier of an $n$ cycle remains bounded along this sequence. From this we extract a so-called rescaling limit, $g=\lim g_k$ where $g_k = M_k\circ f^q_k\circ M_k^{-1}$, $M_k \in \mathrm{Aut}(\mathbb{P}^1)$, and $\deg(g) \geq 2$.  We analyze the possibilities for the limits of the attracting cycles for the $g_k$ and how such limits relate to the dynamics of $g$. Using the existence of a second cycle of bounded multiplier and a case-by-case analysis, we derive an over-determined set of constraints on the dynamics of critical points of $g$, and hence a contradiction.

However, our arguments differ from Epstein's in key respects.

First, Epstein works with sequences $f_k$.    Our argument exploits right away the type $D$ hypothesis to conclude that, via the multipliers of the two attracting cycles, $\mathcal{H}$ is algebraically equivalent to $\mathbb{D} \times \mathbb{D}$ and is in particular semi-algebraic \cite{Milnor12, Milnor14}.  It follows that if $\mathcal{H}$ is unbounded, then we may take the family $f_{k}$ to be of the form $f_{t_k}$ where $f_t$ is a holomorphic one-parameter family, $t \in \mathbb{D}$, with $f_t$ degenerating as $t \to 0$.


Second, Epstein  derives the existence of rescaling limits and the relationship between the dynamics of $g$ and of the $g_k$ via detailed analytic estimates.  In contrast, we rely on much softer arithmetic methods.  The holomorphic family $f_t$ induces a dynamical system $\mathbf{f}: \mathbf{P}^1 \to \mathbf{P}^1$ on the Berkovich space associated to a non-Archimedean field that is a completion of the field of Puiseux series \cite{Kiwi15}. Very loosely, the map $\mathbf{f}$ captures the asymptotics as $t \to 0$ of the family $f_t$ at all possible algebraically parameterized locations and scales.  Kiwi \cite{Kiwi14} gives a thorough analysis of the global structure of these Berkovich dynamical systems in degree two. We begin by generalizing Kiwi's results from the quadratic to the bicritical case; see section \ref{Berh-dyn}.  From the existence of a multiplier that remains bounded, we conclude the existence of a rescaling limit $g$--equivalently, a type II repelling cycle for $\mathbf{f}$; see Lemma \ref{repelling-existence}.  Using a case-by-case analysis, we then derive an over-determined set of constraints on the critical dynamics of $g$.  This step uses  arithmetic results of Rivera-Letelier (see Theorem \ref{fp-number}) and Epstein's refined version of the Fatou-Shishikura Inequality (see Theorems \ref{FSI} and \ref{refined-FSI}).

Finally, we do not know if Epstein's original analytic arguments may be extended to the bicritical case. Arguments of Kabelka \cite{Kabelka10} using a convenient normal form with a distinguished fixed-point should show that boundedness of the multiplier of a cycle yields a nonlinear rescaling limit, which is the first step in both Epstein's and our argument. When $d>2$ there are more than two other fixed-points and estimating their multipliers is more difficult. It would be interesting to have a close interpretation of Epstein's original arguments in arithmetic terms.

We conclude this introduction with a brief survey of boundedness and unboundedness results for hyperbolic components. Makienko \cite[Theorem A]{Makienko00} gives sufficient general topological-dynamical conditions for unboundedness of hyperbolic components; see also \cite{Tan02}. Applied to bicritical maps, this immediately yields that components of type A (maps with an attracting fixed-point) and certain components of type B (maps with both critical points in the immediate basin of an attracting cycle of period at least two) are unbounded. However, we do not know if components of type C (so-called capture components, in which both critical points are attracted to an attracting periodic cycle, but only one lies in the immediate basin) can be unbounded, or if they can be bounded. Unlike components of type D, components of types A, B, and C are transcendental objects. If a component $\mathcal{H}$ of type C were unbounded, we do not know how to extract a degenerating holomorphic family $f_t$ such that $f_{t_k} \in \mathcal{H}$ for some sequence $t_k \to 0$. \\

\noindent {\bf Acknowledgement.} We thank Jan Kiwi for encouragement and helpful insights.

%

\section{Berkovich dynamics of bicritical rational maps}\label{Berh-dyn}

In this section, we first recall some concepts and known results from Berkovich dynamics, and then extend Kiwi's classification results from the quadratic to the bicritical case.

\subsection*{Berkovich space} See \cite{Baker10, Kiwi14} for details.
The field of Laurent series $\sum_{n \geq N}a_nt^n$ has an algebraic completion whose elements are Puiseux series.  It has a further algebraic and metric completion given by a field, here denoted $\mathbb{L}$, defined as follows. The elements of $\mathbb{L}$ are formal series $z=a_{q_0}t^{q_0}+a_{q_1}t^{q_1}+\ldots, q_n \in \mathbb{Q}, q_n \uparrow \infty$. The metric is given by the non-Archimedean absolute value $|z|:=\exp(-q_0)$.  With the metric topology induced by the absolute value, the projective space $\mathbb{P}^1_{\mathbb{L}}$ is totally disconnected and not locally compact. To remedy this, it is compactified by the Berkovich projective space over $\mathbb{L}$, denoted $\mathbf{P}^1$, which is a uniquely arcwise connected, compact, Hausdorff topological space. The points in $\mathbb{P}^1_{\mathbb{L}}$ form a dense subset of $\mathbf{P}^1$, which are called \emph{type I} points. We denote by $\mathbb{H}:=\mathbf{P}^1\setminus\mathbb{P}^1_{\mathbb{L}}$.  The branch points (those whose complement has three or more components) in $\mathbf{P}^1$ are called \emph{type II} points.  A type II point corresponds to a closed ball in $\mathbb{L}$ with radius in the value group of $\mathbb{L}$. The type II point in $\mathbf{P}^1$ corresponding to the closed unit disk in $\mathbb{L}$ is called the \textit{Gauss point} and denoted by $\mathbf{g}$.

Suppose $\xi \in \mathbf{P}^1$. Recall that $\mathbf{P}^1$ is a tree.  A connected component of $\mathbf{P}^1\setminus \{\xi\}$ is therefore called a \emph{direction} at $\xi$; such a component is also sometimes called an \emph{open Berkovich ball}.  Abusing notation and terminology, we denote such a direction by $\vec{v}$. While a direction is determined by the basepoint $\xi$ and any element in the corresponding ball, a direction is not an infinitesimal object.  The set of such directions is called the \emph{tangent space} to $\xi$, is denoted $T_{\xi}\mathbf{P}^1$. If $\xi$ is a type II point, then $T_{\xi}\mathbf{P}^1$ is isomorphic to the complex projective space $\mathbb{P}^1$.
%

\subsection*{Dynamics on Berkovich space}
In the remainder of this section, $\phi$ denotes an element of $\mathbb{L}(z)$. We are mainly interested in the dynamics of $\phi$, so typically we assume $\deg(\phi) \geq 2$. A rational map $\phi\in\mathbb{L}(z)$ extends uniquely to a map $\phi:\mathbf{P}^1\to\mathbf{P}^1$.  In particular, a degree $d$ holomorphic family $f_t, t \in \mathbb{D}$, with $f_t \in \mathrm{Rat}_d\subset \mathbb{P}^{2d+1}$ for $t \neq 0$, induces a map $\mathbf{f}: \mathbf{P}^1\to\mathbf{P}^1$.

For each type II point $\xi \in \mathbf{P}^1$, there is an induced map $T_{\xi}\phi: T_{\xi}\mathbf{P}^1\to T_{\phi(\xi)}\mathbf{P}^1$; under the isomorphism of the previous paragraph, it is a rational map of one complex projective line to another, and therefore has a degree, which is a nonnegative integer.  A direction $\vec{v}\in T_{\xi}\mathbf{P}^1$ is a \textit{bad direction} of $\phi$ if the image of the corresponding component of $\mathbf{P}\setminus\{\xi\}$ under $\phi$ is the whole space $\mathbf{P}^1$. Otherwise, the image of this component is another such component, and we say $\vec{v}$ is a \textit{good direction} of $\phi$.  The properties of `bad' and `good' are not infinitesimal properties.  In summary, under $\phi$, good directions map to directions, and bad directions map to all of $\mathbf{P}^1$.

A $q$-cycle $\langle\xi\rangle:=\{ \xi_0, \xi_1, \ldots, \xi_{q-1}\} \subset\mathbb{H}$ is \textit{indifferent} if $\deg T_{\xi_0}\phi^q=1$. Otherwise, it is \textit{repelling}. A point $\xi\in\mathbf{P}^1$ belongs to the \textit{(Berkovich) Julia set} $J(\phi)$ if for all neighborhoods $V$ of $\xi$, the set $\cup\phi^n(V)$ omits at most two points. The \textit{(Berkovich) Fatou set} $F(\phi)$ is the complement of $J(\phi)$.

\subsection*{Simple maps.} A map $\phi\in\mathbb{L}(z)$ is \textit{simple} if its Julia set $J(\phi)$ is a singleton; equivalently, up to conjugacy, $\phi$ fixes the Gauss point $\bg$, and the corresponding complex rational map $T_{\bg}\phi$ has degree $\deg(\phi)$; equivalently, $\phi$ has (potentially) good reduction; see \cite[Lemma 10.53]{Baker10}. For a degree $d$ holomorphic family $f_t$, the induced map $\mathbf{f}$ is simple if and only if $M_t\circ f_t\circ M_t^{-1} \to g \in \mathrm{Rat}_d$ for some holomorphic family $M_t $ of degree $1$ rational maps.

\subsection*{Rivera domains}  A distinguishing feature of bicritical maps is the presence of special periodic Fatou components, called Rivera domains, in cases of interest.  A component $U$ of $F(\phi)$ of period $p$ is a \textit{Rivera domain} if $\phi^p: U \to U$ is a bijection; if $p=1$ we say $U$ is a \emph{fixed} Rivera domain. The boundary of such a Rivera domain consists of a finite set of periodic cycles in $J(\phi)$ \cite{Rivera03II}.  For a fixed Rivera domain $U$ which is not an open Berkovich ball, the convex hull $\mathrm{Hull}(\partial U)$ is an invariant finite simplicial tree.  A fixed Rivera domain $U$ is \textit{starlike} if the finite simplicial tree $\mathrm{Hull}(\partial U)$ contains at most one branch point and exactly one fixed point.  \par

Let $U\subset\mathbf{P}^1$ be a connected set. For $\xi\in\mathbf{P}^1\setminus U$, denote by $\vec{v}_{\xi}(U)\in T_{\xi}\mathbf{P}^1$ the direction at $\xi$ containing $U$. To ease notation, if $U=\{\xi'\}$ contains exactly one point, instead of $\vec{v}_{\xi}(U)$, we sometimes write $\vec{v}_{\xi}(\xi')$ the direction at $\xi$ containing $U$. The following result is due to Rivera-Letelier, see \cite{Rivera03II}.  For the definition of multiplicity, see \cite{Baker10}.\par

\begin{theorem}\cite[Theorem 2.4]{Kiwi14}\label{fp-number}
Suppose $\deg(\phi) \geq 2$. Assume that $U$ is a fixed Rivera domain for $\phi$. Let $N(f,U)\ge 0$ be the number of fixed points, counted with multiplicity, in $U\cap\mathbb{P}^1_{\mathbb{L}}$. Then
$$N(f,U)= 2+ \sum_{\xi\in\partial U, \phi(\xi)=\xi}(m_\xi(U)-2),$$
where $m_\xi(U)\ge 0$ is the multiplicity of the direction $\vec{v}_{\xi}(U)\in T_{\xi}\mathbf{P}^1$ as a fixed point of $T_\xi\phi$.
\end{theorem}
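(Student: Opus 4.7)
My plan is to combine Rivera-Letelier's non-Archimedean argument principle on ``good'' Berkovich balls with a tree-combinatorial identity on the invariant skeleton $T := \mathrm{Hull}(\partial U)$. The local ingredient counts type I fixed points in each good ball anchored at a fixed boundary point; the global ingredient assembles these local counts while correcting for the overcounting dictated by the structure of $T$.

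First, I would invoke the structure theorem of Rivera-Letelier, which tells us that $\partial U$ is a finite disjoint union of $\phi$-cycles of type II points. List the $\phi$-fixed ones as $\xi_1,\dots,\xi_k$. Because $U$ is a Rivera domain, $\phi|_U$ is a bijection onto $U$, and consequently, for each $j$, the direction $\vec v_j:=\vec v_{\xi_j}(U)\in T_{\xi_j}\mathbf{P}^1$ is fixed by the tangent map $T_{\xi_j}\phi$, and the open Berkovich ball $B_j$ at $\xi_j$ in the direction $\vec v_j$ is mapped into itself by $\phi$ (i.e., $\vec v_j$ is a good direction). This reduces the global count to counting type I fixed points in each $B_j$ and tracking how they sit relative to $U$.

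Second, I would apply the local count on a good fixed ball: the number of type I fixed points of $\phi$ in $B_j$, counted with multiplicity, equals the multiplicity $m_j := m_{\xi_j}(U)$ of $\vec v_j$ as a fixed point of $T_{\xi_j}\phi$. This is the Weierstrass preparation/Newton polygon analysis of the analytic function $\phi(z)-z$ on a forward-invariant ball, and is the natural non-Archimedean analog of the classical argument principle. The point is that the contribution of $T_{\xi_j}\phi$ near $\vec v_j$ governs how many roots of $\phi(z)-z$ lie in the ball.

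Third, I would sum over $j$ while tracking overcounting. Each type I fixed point in $U$ lies in every $B_j$, whereas every type I fixed point in $B_j\setminus U$ lies in $B_j$ but on the $U$-opposite side of at least one edge of $T$ adjacent to $\xi_j$; using $\chi(T)=1$ and a tree-combinatorial identity, the total overcount reduces to $2(k-1)$, yielding
\[
N(f,U)\;=\;\sum_{j=1}^{k} m_j \;-\; 2(k-1)\;=\;2+\sum_{j=1}^{k}(m_j-2).
\]
The main obstacle will be making this third step precise when a fixed $\xi_j$ is an interior branch point of $T$: one must carefully distinguish the fixed directions of $T_{\xi_j}\phi$ that point into $U$, along $T$ (toward other boundary vertices), or into other Fatou components, and then check that, no matter how these directions are distributed, the combinatorial bookkeeping across all $\xi_j$ collapses to $2(k-1)$. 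Handling this uniformly---and with the correct multiplicities when edges of $T$ are themselves collapsed or subdivided by $\phi$---is the heart of the argument.
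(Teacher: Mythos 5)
This statement is not proved in the paper: it is quoted from Kiwi (Theorem 2.4 of the cited reference) and attributed to Rivera-Letelier, so the only comparison available is between your argument and the correct statement itself. Your proposal has a genuine gap in both the local and the global step. Locally, the identity ``number of type I fixed points in $B_{\vec v}$ equals $m_{\vec v}$'' holds only when $\vec v$ is a \emph{good} fixed direction; in general Rivera-Letelier's directional count is $N(B_{\vec v})=m_{\vec v}+s_{\vec v}$, where $s_{\vec v}\ge 0$ is the surplus (the multiplicity of $\vec v$ as a hole of the reduction), and $s_{\vec v}>0$ exactly when $\vec v$ is bad. Your assertion that each $\vec v_j=\vec v_{\xi_j}(U)$ is good is unjustified and is typically false when $|\partial U|\ge 2$: since $\phi|_U$ is injective, the ramification locus and all critical points lie outside $U$, hence inside $B_j$ for every $j$ except (at most) the one boundary point they hang off of, and a ball meeting the ramification in this way is generally bad. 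Dropping the surplus terms is not a cosmetic omission --- it is precisely where the degree $d$ enters the count.

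The global bookkeeping then cannot close. A type I fixed point lying in a non-$U$ direction at a boundary point $\eta$ belongs to $B_j$ for every fixed $\xi_j\ne\eta$, so writing $F$ for the set of type I fixed points one gets
\[
\sum_{j=1}^{k}N(B_j)\;=\;k\,N(f,U)\;+\;\sum_{z\in F\setminus U}\#\{j: z\in B_j\}\;=\;N(f,U)+(k-1)(d+1)+\textstyle\sum_{\eta}D_\eta,
\]
using $\#F=d+1$, where $D_\eta\ge 0$ counts fixed points hanging off the \emph{non-fixed} boundary points. Your claimed overcount $2(k-1)$ would force $(k-1)(d+1)+\sum_\eta D_\eta=2(k-1)$, which is impossible for $d\ge 2$ and $k\ge 2$; already for an annular $U$ with two fixed boundary points your scheme yields $m_1+m_2-(d+1)$ rather than $m_1+m_2-2$. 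The discrepancy $(k-1)(d-1)+\sum_\eta D_\eta$ is exactly what the surplus terms $\sum_j s_j$ must supply, and proving that identity --- i.e., relating the distribution of the $2d-2$ critical points among the non-$U$ directions to the holes of the reductions $T_{\xi_j}\phi$ --- is the actual content of Rivera-Letelier's theorem. In short, the ``$2$'' in the formula comes from the global counts $d+1$ and $2d-2$ attached to $\mathbb{P}^1$, not from $\chi(T)=1$; the tree combinatorics alone cannot produce it.
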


\subsection*{The ramification locus} Suppose now $d:=\deg(\phi)\geq 2$.
A point $z \in \mathbb{P}^1_{\mathbb{L}}$ is a \textit{critical point} of $\phi$ if $\phi'(z)=0$ when computed in local coordinates; there are $2d-2$ of them, counted with multiplicity.  The map $\phi$ is \textit{bicritical} if it has exactly two critical points. Just as for maps defined over $\mathbb{C}$, a bicritical map has the form $\phi(z)=M\circ z^d \circ N$ for some $M, N \in \mathrm{Aut}(\mathbb{P}^1_{\mathbb{L}})$.
The \emph{ramification locus} is defined by
$$\mathcal{R}_\phi=\{\xi\in\mathbf{P}^1:\deg T_{\xi}\phi\ge 2\}.$$
If some point in $\mathbf{P}^1$ has local degree achieving the maximum possible value $d$, i.e. if $\phi$ is totally ramified, then the ramification locus $\mathcal{R}_\phi$ coincides with the convex hull of the critical points of $\phi$ \cite[Theorem C]{Faber13I}.

\subsection*{Bicritical rational maps}  We next specialize exclusively to bicritical maps $\phi$ of degree $d \geq 2$. The previously mentioned fact and  \cite[Theorem 9.42]{Baker10} yield the following, which we state for reference and use repeatedly without explicit mention in what follows.

\begin{lemma}\label{ramification}
 Let $\phi:\mathbf{P}^1\to\mathbf{P}^1$ be a bicritical rational map.
\begin{enumerate}
\item The ramification locus $\mathcal{R}_\phi$ is the segment connecting the two critical points.
\item $\deg T_{\xi}\phi=d$ if $\xi\in\mathcal{R}_\phi$ and equals $1$ otherwise.
\item If $\xi \not\in \mathcal{R}_\phi$, a direction is good if and only if it does not meet $\mathcal{R}_\phi$. If $\xi \in \mathcal{R}_\phi$, every direction is good.
\end{enumerate}
\end{lemma}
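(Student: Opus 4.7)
The plan is to reduce all three parts to two ingredients already in hand: the totally ramified case of Faber's theorem on ramification loci (cited just above the statement), and the bicritical normal form $\phi=M\circ z^d\circ N$.

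First I would observe that $\phi$ has $2d-2$ critical points counted with multiplicity, concentrated at exactly two geometric points, which forces each critical point to have multiplicity $d-1$ and hence local degree $d$. In particular $\phi$ is totally ramified, so \cite[Theorem C]{Faber13I} immediately yields part (1): $\mathcal{R}_\phi$ is the convex hull of the two critical points, which in the tree $\mathbf{P}^1$ is the unique arc joining them.

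For part (2), the statement off the ramification locus is immediate from the definition $\mathcal{R}_\phi=\{\xi:\deg T_\xi\phi\geq 2\}$ together with the fact that $\deg T_\xi\phi\geq 1$ for a nonconstant map. On the ramification locus I would invoke the normal form: since Möbius transformations act on $\mathbf{P}^1$ as tree isomorphisms preserving local degrees and the good/bad distinction for directions, it suffices to verify the claim for $\phi_0(z)=z^d$. For $\phi_0$ the critical points are $0$ and $\infty$, the ramification locus is the segment between them passing through $\mathbf{g}$, and one checks directly (e.g.\ via the action $r\mapsto r^d$ on radii of balls) that $\deg T_\xi\phi_0=d$ at every $\xi$ on this segment.

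Part (3) then follows from the same normal-form analysis combined with a degree count, essentially the content of \cite[Theorem 9.42]{Baker10}: at each $\xi$ the total degree $d$ of $\phi$ is distributed between the degree of $T_\xi\phi$ (accounting for preimages among good directions) and contributions from bad directions. If $\xi\in\mathcal{R}_\phi$ then $T_\xi\phi$ already has degree $d$, absorbing the entire degree and leaving no bad directions. If $\xi\notin\mathcal{R}_\phi$ then $T_\xi\phi$ has degree $1$; since $\mathcal{R}_\phi$ is connected and disjoint from $\xi$, it lies in a single direction $\vec{v}$ at $\xi$, and this $\vec{v}$ must carry the remaining $d-1$ units of degree, forcing it to be the unique bad direction while all others are good. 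The step I expect to require the most care is the degree-balance bookkeeping and the justification that Möbius conjugation over $\mathbb{L}$ preserves the relevant Berkovich labels; once those are in place the proof reduces to the monomial case.
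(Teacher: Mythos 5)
Your proposal is correct and follows essentially the same route as the paper, which offers no written proof but derives the lemma from exactly the two ingredients you use: Faber's Theorem C for totally ramified maps (applicable since each of the two critical points of a bicritical map has multiplicity $d-1$) and the degree/surplus bookkeeping of \cite[Theorem 9.42]{Baker10}, with the normal form $\phi=M\circ z^d\circ N$ reducing everything to the monomial case. One cosmetic point: the normal form involves pre- and post-composition by two different M\"obius maps rather than a conjugation, but since degree-one maps preserve local degrees and send balls to balls, your transfer of the computation from $z^d$ to $\phi$ is still valid.
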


\subsection*{Dynamical structure of bicritical maps}
Here we generalize Kiwi's structure result of \cite{Kiwi14}. In the next result, case (3) will be of primary interest, since rescaling limits correspond to type II repelling periodic points, see \cite{Kiwi15}.

\begin{proposition}\label{Julia-cycle}
Let $\phi:\mathbf{P}^1\to\mathbf{P}^1$ be a bicritical rational map which is not simple. Then exactly one of the following holds:
\begin{enumerate}
\item There are no cycles in $J(\phi)\cap\mathbb{H}$.
\item There is exactly one cycle $\mathcal{O}$ in $J(\phi)\cap\mathbb{H}$, and $\mathcal{O}$ is indifferent.
\item There is at least one repelling cycle in $J(\phi)\cap\mathbb{H}$.
\end{enumerate}
\end{proposition}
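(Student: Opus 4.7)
The trichotomy is evidently mutually exclusive, so the plan is to prove it is exhaustive by analyzing cycles in $J(\phi)\cap\mathbb{H}$ through their interaction with the ramification locus. The key input is the multiplier dichotomy coming from Lemma \ref{ramification}(2): for any $q$-cycle $\langle\xi\rangle=\{\xi_0,\dots,\xi_{q-1}\}\subset\mathbb{H}$, the multiplier is
\[
\deg T_{\xi_0}\phi^q \;=\;\prod_{i=0}^{q-1}\deg T_{\xi_i}\phi \;=\; d^{\,k},
\]
where $k=|\langle\xi\rangle\cap\mathcal{R}_\phi|$. Thus $\langle\xi\rangle$ is repelling if and only if it meets $\mathcal{R}_\phi$, and otherwise it is indifferent and entirely disjoint from $\mathcal{R}_\phi$.

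Given this, the easy cases fall out immediately. If $J(\phi)\cap\mathbb{H}$ contains no cycle, we are in case (1); if it contains any cycle that meets $\mathcal{R}_\phi$, that cycle is repelling and we are in case (3). The remaining possibility is that every cycle in $J(\phi)\cap\mathbb{H}$ lies in $\mathbb{H}\setminus\mathcal{R}_\phi$ and is indifferent, and the proposition reduces to showing that in this last situation there is at most one such cycle.

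For the uniqueness step, the plan is to exploit that $\phi$ has no critical points off $\mathcal{R}_\phi$ and that by Lemma \ref{ramification}(3) every direction at a point of $\mathcal{R}_\phi$ that leaves $\mathcal{R}_\phi$ is good; hence $\phi$ maps each connected component of $\mathbf{P}^1\setminus\mathcal{R}_\phi$ injectively, and the restricted dynamics on these open Berkovich balls is essentially automorphic. An indifferent type II cycle $\mathcal{O}\subset\mathbb{H}\setminus\mathcal{R}_\phi$ then sits on the boundary of a periodic Rivera domain $U$ for an iterate of $\phi$, and the finite tree $\mathrm{Hull}(\mathcal{O}\cup\mathcal{R}_\phi)$ carries a constrained automorphic action on its $\mathcal{O}$-side. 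Applying Rivera-Letelier's fixed-point count (Theorem \ref{fp-number}) to $U$, together with the bicritical restriction that both critical points are confined to $\mathcal{R}_\phi$, should force any second indifferent cycle to share the boundary of $U$, and then a direct tangent-map analysis at a common boundary point rules this out.

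The main obstacle, as anticipated, is this uniqueness step. In Kiwi's quadratic treatment \cite{Kiwi14}, the severely restricted critical structure gives a transparent count; the bicritical generalization demands that one re-examine each step with two (possibly distinct) critical points in $\mathcal{R}_\phi$ and confirm that the bookkeeping for boundary cycles of Rivera domains still forbids two coexisting indifferent cycles off $\mathcal{R}_\phi$. The preliminary reductions—the multiplier dichotomy and the identification of repelling cycles as exactly those meeting $\mathcal{R}_\phi$—are routine consequences of Lemma \ref{ramification}; the delicate work lies in controlling the tree-dynamical structure of the bicritical Rivera domain argument.
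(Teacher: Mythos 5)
Your reduction is sound as far as it goes: by Lemma \ref{ramification}(2) and multiplicativity of local degrees along an orbit, a cycle $\langle\xi\rangle\subset\mathbb{H}$ has $\deg T_{\xi_0}\phi^q=d^{k}$ with $k=|\langle\xi\rangle\cap\mathcal{R}_\phi|$, so a Julia cycle in $\mathbb{H}$ is repelling exactly when it meets the ramification segment $[c_1,c_2]=\mathcal{R}_\phi$, and is indifferent otherwise. This correctly shows the three cases are mutually exclusive and reduces the proposition to the one nontrivial assertion: if $J(\phi)\cap\mathbb{H}$ contains a cycle but no repelling cycle, then it contains \emph{exactly one} cycle. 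This is also the role Lemma \ref{ramification} plays in the paper, whose proof consists of observing that the bicritical ramification structure (a single segment joining the two critical points, with $\deg T_\xi\phi=d$ on it and $1$ off it) is combinatorially identical to the quadratic case, so that Kiwi's proof of \cite[Proposition 3.1]{Kiwi14} goes through verbatim.

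The gap is that you do not actually prove the uniqueness step, which is the entire content of the proposition. Your sketch rests on several unestablished claims: that an indifferent Julia cycle off $\mathcal{R}_\phi$ is of type II and is the boundary cycle of a periodic Rivera domain (a priori an indifferent periodic point could be of type III or IV, and even for a type II Julia point one must argue that some adjacent periodic Fatou component exists and that the return map on it is a bijection); that $\phi$ maps each component of $\mathbf{P}^1\setminus\mathcal{R}_\phi$ injectively (local injectivity off the ramification locus does not by itself give global injectivity of a component onto its image --- one must handle the bad direction at the boundary point of the component); and, most importantly, that the fixed-point count of Theorem \ref{fp-number} "should force" a second indifferent cycle to share $\partial U$ and that a "direct tangent-map analysis" then yields a contradiction. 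These last two sentences are precisely where the proof has to happen, and they are asserted rather than argued. As written, the proposal establishes only the easy multiplier dichotomy; to complete it you must either reproduce Kiwi's tree-dynamical argument (tracking how $\mathrm{Hull}(\mathcal{O}\cup\mathcal{R}_\phi)$ maps under $\phi$ and using Theorem \ref{fp-number} to bound the number of boundary cycles of the resulting Rivera domains) or explicitly invoke \cite[Proposition 3.1]{Kiwi14} after checking that each of its steps uses only the ramification structure supplied by Lemma \ref{ramification}.
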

\begin{proof}
Using Lemma \ref{ramification}, the proof of this proposition is exactly the same as the one given in \cite[Proposition 3.1]{Kiwi14}
\end{proof}

\subsection*{A trichotomy}
The next three propositions give the rough structure of the dynamics in each of these cases, respectively.
For completeness, we give the structure results also in cases (1) and (2). They follow from the same arguments in  \cite[Propositions 4.1 and 4.2]{Kiwi14}, and we omit the proofs.
\begin{proposition}\label{attracting}
Let $\phi:\mathbf{P}^1\to\mathbf{P}^1$ be a bicritical rational map which is not simple. Assume that $J(\phi)\cap\mathbb{H}$ contains no periodic points. Then $\phi$ has an attracting fixed point $\xi\in\mathbb{P}^1_{\mathbb{L}}$ and $F(\phi)$ is the immediate basin of $\xi$.
\end{proposition}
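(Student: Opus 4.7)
My plan is to combine Rivera-Letelier's classification of periodic Fatou components with the bicritical constraint from Lemma \ref{ramification}, thereby collapsing the Berkovich dynamics onto the immediate basin of a single classical attracting fixed point. By Proposition \ref{Julia-cycle} we are in case (1), so every cycle in $\mathbb{H}$ lies in the Fatou set.

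First, I would rule out periodic Rivera domains. A periodic component of $F(\phi)$ is, by Rivera-Letelier, either the immediate basin of an attracting cycle in $\mathbb{P}^1_{\mathbb{L}}$ or a Rivera domain, and the boundary of a Rivera domain contains a periodic type II cycle in $J(\phi)$. The hypothesis that $J(\phi)\cap\mathbb{H}$ has no cycles therefore forces every periodic Fatou component to be the immediate basin of a classical attracting cycle $\mathcal{O}\subset\mathbb{P}^1_{\mathbb{L}}$.

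Second, I would use the bicritical hypothesis to show that there is a unique attracting fixed point $\xi\in\mathbb{P}^1_{\mathbb{L}}$ and that both critical points lie in its immediate basin. Each attracting cycle absorbs at least one critical point, so with critical points $c_1, c_2$ there are at most two attracting cycles. By Lemma \ref{ramification}, the ramification locus is the segment $[c_1,c_2]$ and every interior type II point of this segment has local degree $d$. If $c_1$ and $c_2$ lay in distinct basins, or in a common basin of period $\ge 2$, then tracking the action of $\phi$ on $[c_1,c_2]$ and its iterates, and applying Theorem \ref{fp-number} to the Fatou components encountered along the way, would produce a fixed type II point in the Julia set, contradicting the hypothesis.

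Finally, to see that $F(\phi)$ equals the immediate basin $B$ of $\xi$, I would rule out preperiodic Fatou components: any such component eventually lands in $B$ under iteration, and since both critical points already lie in $B$ and all of the ramification data are accounted for on the invariant segment $\mathcal{R}_\phi$, no strictly preperiodic Fatou component can exist without forcing a periodic type II point in $J(\phi)$, again contradicting the hypothesis. The chief obstacle is the second step: carefully eliminating the two-basin and period-$\ge 2$ configurations via an analysis of the action of $\phi$ on $\mathcal{R}_\phi$ and its iterates, where the non-simplicity hypothesis and the full strength of Theorem \ref{fp-number} do the essential work, and which is the delicate piece of adapting Kiwi's quadratic argument to the bicritical setting.
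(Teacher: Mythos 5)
Your overall strategy --- Rivera-Letelier's dichotomy for periodic Fatou components, plus the observation that a Rivera domain would place a periodic cycle of $J(\phi)\cap\mathbb{H}$ on its boundary --- is exactly the route the paper intends: the authors omit the proof entirely, asserting it is the same as Kiwi's argument for the quadratic case once Lemma \ref{ramification} is available. However, as written your proposal is a plan with genuine gaps rather than a proof. First, you never establish that an attracting classical fixed point, or even a single periodic Fatou component, exists. The standard mechanism is the rational fixed-point index formula over $\mathbb{L}$: if all $d+1$ fixed points in $\mathbb{P}^1_{\mathbb{L}}$ were repelling, the ultrametric inequality applied to $\sum_i 1/(1-\lambda_i)=1$ gives a contradiction; a nonrepelling classical fixed point lies in $F(\phi)$, its Fatou component is fixed, and the fixed point cannot be indifferent because an indifferent classical fixed point sits in a Rivera domain, which your first step excludes. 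Without some such input your second step has nothing to start from.

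Second, the heart of your argument --- that both critical points lie in the immediate basin of a single attracting \emph{fixed} point --- is only announced ("tracking the action of $\phi$ on $[c_1,c_2]$\dots would produce a fixed type II point"), and the tool you invoke there, Theorem \ref{fp-number}, does not apply: it counts classical fixed points inside a \emph{fixed Rivera domain}, and by your own first step there are no Rivera domains in this case. What is needed is a concrete tree argument: if the critical points were attracted to two distinct cycles, or to a cycle of period at least $2$, the itinerary of the invariant convex hull generated by $\mathcal{R}_\phi=[c_1,c_2]$ forces a $\phi$-fixed branch point at which the tangent map has a fixed direction of multiplicity at least $2$, hence a repelling or indifferent fixed point in $J(\phi)\cap\mathbb{H}$, contradicting the hypothesis. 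You name the right target but supply no mechanism. Finally, your last step silently assumes every Fatou component is preperiodic ("any such component eventually lands in $B$"); the absence of wandering components is not automatic over $\mathbb{L}$ and must either be cited or derived from the fact that both critical orbits converge to $\xi$, so that $\mathcal{R}_\phi$ is eventually absorbed by the immediate basin and every direction off the critical tree maps with local degree one. In short, the skeleton matches Kiwi's argument, but all three load-bearing steps are left unproved, and the one tool you do cite in the crucial step is misapplied.
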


\begin{proposition}\label{indifferent}
Let $\phi:\mathbf{P}^1\to\mathbf{P}^1$ be a bicritical rational map which is not simple. Assume that $J(\phi)\cap\mathbb{H}$ contains an indifferent periodic orbit. Then every periodic Fatou component is a fixed Rivera domain.
\end{proposition}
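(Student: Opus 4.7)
Since the authors explicitly state that the proof is identical to Kiwi's [Kiwi14, Proposition 4.2] in the quadratic case, my plan is to reproduce that argument, using Lemma \ref{ramification} in place of the analogous quadratic ramification facts. The essential inputs are: (i) the uniqueness of the cycle $\mathcal{O}$ in $J(\phi)\cap\mathbb{H}$ and the absence of repelling cycles there (Proposition \ref{Julia-cycle}); (ii) the fixed-point count for Rivera domains (Theorem \ref{fp-number}); and (iii) the structural description of the ramification locus as a single arc with interior local degree $d$ and exterior local degree $1$.

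The argument proceeds in three stages. First, I would unfold the indifference hypothesis: writing $\mathcal{O}=\{\xi_0,\dots,\xi_{q-1}\}$, the relation $1=\deg T_{\xi_0}\phi^q=\prod_{i=0}^{q-1}\deg T_{\xi_i}\phi$ combined with Lemma \ref{ramification}(2) (each local degree is either $1$ or $d$) forces $\mathcal{O}\cap\mathcal{R}_\phi=\emptyset$ and makes every $T_{\xi_i}\phi$ an automorphism on directions; by Lemma \ref{ramification}(3), the only possible bad direction at $\xi_i$ is the one pointing into $\mathcal{R}_\phi$, so all other directions are good and permuted bijectively.

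Second, for a periodic Fatou component $U$ of period $p$, I would invoke Rivera-Letelier's boundary structure theorem to conclude that some $\xi_i\in\mathcal{O}$ lies on $\partial U$: any periodic cycle on $\partial U$ lying in $\mathbb{H}$ must be $\mathcal{O}$ by uniqueness, and periodic cycles purely in $\mathbb{P}^1_\mathbb{L}$ on $\partial U$ are excluded by the bicritical cap on critical orbits as in Kiwi's argument. Since $U$ then occupies a good direction at $\xi_i$, the automorphism $T_{\xi_i}\phi$ sends it bijectively to another good direction at $\xi_{i+1}$.

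Finally, I would apply Theorem \ref{fp-number} to $\phi^q$ acting on the Rivera domain $V$ containing $U$: the multiplicity contributions $m_\xi(V)-2$ at the boundary points of $\mathcal{O}$, combined with the bicritical constraint that both critical points lie on $\mathcal{R}_\phi$ and hence off $\mathcal{O}$, pin down the fixed-point count inside $V\cap\mathbb{P}^1_\mathbb{L}$ and force $V=U$ with $\phi(U)=U$ bijectively. The main obstacle I anticipate, exactly as in Kiwi, is this last collapse: one must carefully track the full cyclic orbit $U,\phi(U),\dots,\phi^{p-1}(U)$ using bijectivity of each $T_{\xi_i}\phi$ on directions and verify that only two critical orbits are insufficient to sustain any nontrivial period, so that $p=1$.
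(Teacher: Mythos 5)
Your plan coincides with the paper's: the paper gives no proof of this proposition at all, saying only that it "follows from the same arguments in \cite{Kiwi14}, Propositions 4.1 and 4.2," so reproducing Kiwi's argument with Lemma \ref{ramification} supplying the bicritical input is the right approach. Your first stage is correct and is exactly the point where bicriticality enters: indifference of $\mathcal{O}$ forces $\mathcal{O}\cap\mathcal{R}_\phi=\emptyset$, each $T_{\xi_i}\phi$ is then a M\"obius map, and the unique bad direction at $\xi_i$ is the one containing the arc $\mathcal{R}_\phi$.

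Stages 2 and 3, however, have genuine gaps. In stage 2, the boundary-structure theorem quoted in the paper applies only to Rivera domains, so it cannot be invoked for an arbitrary periodic Fatou component $U$: you must first use Rivera-Letelier's classification of periodic components (immediate attracting basin versus Rivera domain) and then \emph{rule out} attracting components --- which is the substantive half of the conclusion ``every periodic component is a Rivera domain,'' not a formality. The issue you do raise, ``periodic cycles purely in $\mathbb{P}^1_{\mathbb{L}}$ on $\partial U$,'' is a non-issue (boundary points of Fatou components lie in $\mathbb{H}$), and ``excluded by the bicritical cap on critical orbits'' is not an argument; meanwhile the actual difficulty --- e.g.\ that an attracting component carries a critical point and has degree $\ge 2$ under its return map, whereas every directional degree at an indifferent point of $\mathcal{O}$ equals $1$ and a bad direction cannot be a periodic Fatou component since its image is all of $\mathbf{P}^1$ --- is not addressed. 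Your assertion ``$U$ then occupies a good direction at $\xi_i$'' is likewise unjustified, since a priori $U$ could sit inside the bad direction. In stage 3, ``the Rivera domain $V$ containing $U$'' is not well-defined (Fatou components are maximal), and Theorem \ref{fp-number} applied to the return map on $U$ only gives $N\le 2-|\partial U|$ (each $m_\xi(U)\le 1$ at indifferent boundary points), hence $|\partial U|\le 2$; this is a useful constraint but does not by itself yield period $1$. The combinatorial step forcing $p=1$ --- which you yourself flag as the main obstacle --- is precisely what is missing, so the proposal as written is an outline of the right strategy rather than a proof.
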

In our main case (3) of interest, there are repelling type II periodic points in $\mathbb{H}$, and we have the following.  A degree $d\ge 2$ polynomial $P(z)\in\mathbb{C}[z]$ is \textit{unicritical} if $P(z)$ is conjugate via an element of $\mathrm{Aut}(\mathbb{C})$ to $z^d+c$.

 \begin{proposition}\label{repelling}
Let $\phi:\mathbf{P}^1\to\mathbf{P}^1$ be a degree $d\ge 2$ bicritical rational map which is not simple. Assume that $J(\phi)\cap\mathbb{H}$ contains a repelling periodic orbit. Then the following statements hold:
\begin{enumerate}
\item $\phi$ has exactly one fixed Fatou component $U$ which is a starlike Rivera domain. The boundary of $U$ is a repelling cycle of type II points $\mathcal{O}=\{\xi_0, \cdots, \xi_{q-1}\}$, $q>1$. The set $\mathrm{Hull}(\partial U)$ contains a unique fixed point $\mathbf{c}$ and the map $T_{\mathbf{c}}\phi$ is a rotation of order $q$.  The periodic Fatou components of higher periods, if they exist, are open Berkovich balls.
\item $\mathbf{P}^1\setminus U$ is the disjoint union of $q$ closed Berkovich balls $B_0,\cdots, B_{q-1}$ labeled such that $\partial B_j=\{\xi_j\}$. One of these balls, say $B_0$, contains both critical points of $\phi$ and the ramification locus. The local degrees are $\deg_{\xi_0}\phi=d$ and $\deg_{\xi_j}\phi=1$ for $j \in \{1, \ldots, q-1\}$. Moreover, for $j=1,\cdots,q-1$, the bad direction at $\xi_j$ is the direction $\vec{v}_{\xi_j}(U)$.
\item For all $\xi_j$, the first-return map $T_{\xi_j}\phi^q$ is a degree $d$ bicritical rational map with a multiple fixed point at the direction $\vec{v}_{\xi_j}(U)$.
\item $\phi$ has at most $2$ repelling cycles in $\mathbb{H}$. Assume that $\phi$ has another type II repelling $q'$-cycle $\mathcal{O}'\subset\mathbb{H}$. Then $q'>q$ and for any $\xi'\in\mathcal{O}'$, the map $T_{\xi'}\phi^{q'}$ conjugate to a degree $d$ unicritical polynomial. Moreover, there exists a critical point of $\phi$ that belongs to a Fatou component which is an open ball $B'$ with  $\partial B'\subset\mathcal{O}'$.
\end{enumerate}
\end{proposition}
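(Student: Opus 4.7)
The plan is to adapt Kiwi's proof of \cite[Proposition 4.3]{Kiwi14} from the quadratic to the bicritical setting, systematically substituting Lemma \ref{ramification} for the quadratic-specific structural inputs. For part (1), I start from the repelling $q$-cycle $\mathcal{O}$ and form the convex hull $\mathrm{Hull}(\mathcal{O})$, a $\phi$-invariant finite simplicial subtree of $\mathbb{H}$, and extract a fixed point $\mathbf{c} \in \mathrm{Hull}(\mathcal{O})$ of the induced tree endomorphism. The hypothesis that $\mathcal{O}$ is a single $q$-cycle forces $T_{\mathbf{c}}\phi$ to cyclically permute the directions from $\mathbf{c}$ to the $\xi_j$'s, so $T_{\mathbf{c}}\phi$ is a rotation of order $q$ and $\mathbf{c}$ is the unique fixed point in $\mathrm{Hull}(\mathcal{O})$. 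The Fatou component $U$ containing $\mathbf{c}$ cannot be the immediate basin of a type I attracting fixed point, since $\mathbf{c}$ is a type II point fixed by $\phi$; hence Rivera-Letelier's classification of fixed Fatou components forces $U$ to be a Rivera domain, starlike because $\mathbf{c}$ is the only branch/fixed point of $\mathrm{Hull}(\partial U)$.

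For parts (2) and (3), bijectivity of $\phi^q$ on $U$ forces $\mathcal{R}_\phi \cap U = \emptyset$, so the connected segment $\mathcal{R}_\phi$ from Lemma \ref{ramification}(1) lies in a single component $B_0$ of $\mathbf{P}^1 \setminus U$, with both critical points in $B_0$. Multiplicativity $\deg T_{\xi_0}\phi^q = \prod_{i=0}^{q-1}\deg T_{\xi_i}\phi$ together with the repelling hypothesis forces some $\xi_i \in \mathcal{R}_\phi$; disjointness of the $B_j$'s pins this index down to $i=0$, and then Lemma \ref{ramification}(2) yields the stated local degrees. Lemma \ref{ramification}(3) identifies $\vec{v}_{\xi_j}(U)$ as the unique bad direction at $\xi_j$ for $j \neq 0$, since this is the only direction at $\xi_j$ containing $\mathcal{R}_\phi$. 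Bicriticality of the return map $T_{\xi_j}\phi^q$ is inherited from the unique non-trivial factor $T_{\xi_0}\phi$: the two critical points of $\phi$ determine two distinct directions at $\xi_0$, each a totally ramified point of $T_{\xi_0}\phi$ with local degree $d$, exhausting the Riemann-Hurwitz ramification. The direction $\vec{v}_{\xi_j}(U)$ is fixed by $T_{\xi_j}\phi^q$ since $\phi^q(U) = U$, and its multiplicity as a fixed point exceeds $1$ via the parabolic-boundary phenomenon for Rivera domains, a consequence of Theorem \ref{fp-number} applied to $U$.

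For part (4), any second type II repelling cycle $\mathcal{O}'$ must meet $\mathcal{R}_\phi$ to be repelling (otherwise all local degrees along it are $1$), so $\mathcal{O}' \subset B_0$ and lies in the Julia set of the bicritical return map $T_{\xi_0}\phi^q$. Applying the trichotomy of Proposition \ref{Julia-cycle} and the analogue of part (1) to this return map, combined with the constraint that $\vec{v}_{\xi_0}(U)$ is already a multiple fixed point there, forces $q' > q$ and identifies the next return map $T_{\xi'}\phi^{q'}$ as a degree-$d$ unicritical polynomial whose super-attracting fixed point at infinity corresponds to the direction $\vec{v}_{\xi'}(U)$. The bound of at most two repelling cycles reflects that each must anchor one of the two critical orbits of $\phi$. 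The critical point of $\phi$ not consumed by $\mathcal{O}$ then lies in a Fatou component attached to $\mathcal{O}'$, yielding the open Berkovich ball $B'$.

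The main obstacle I anticipate is part (4): the strict inequality $q' > q$, the unicritical-polynomial description of the secondary return map, and the sharp bound of at most two repelling cycles all require delicate bookkeeping of how the two critical orbits of $\phi$ distribute across the nested tree structure inside $B_0$ and interact with the return dynamics. Parts (1)--(3) are fairly direct generalizations of the quadratic case once Lemma \ref{ramification} is in hand.
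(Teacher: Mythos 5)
Your overall strategy --- transplant Kiwi's quadratic arguments using Lemma \ref{ramification} --- is exactly what the paper does: its proof of (1)--(2) consists of citing the proofs of conclusion (4) of \cite[Theorem 2]{Kiwi14} and conclusions (1)--(2) of \cite[Lemma 5.1]{Kiwi14}, and your reconstruction of (1)--(3) is essentially sound. Three small repairs there: the ``rotation of order $q$'' claim needs $\mathbf{c}\notin\mathcal{R}_\phi$ (so that $\deg T_{\mathbf{c}}\phi=1$); the assertion $q>1$ needs the non-simplicity hypothesis (a repelling fixed type II point of a bicritical map would have reduction of degree $d$, i.e.\ good reduction); and to extract the multiple fixed point in (3) from Theorem \ref{fp-number} you must apply it to $\phi^q$ (for which $U$ is a fixed Rivera domain with $q$ fixed boundary points), note that the multiplicities $m_{\xi_j}(U)$ are all equal to a common value $m$ because the degree-one germs of $T_{\xi_j}\phi$ at $\vec{v}_{\xi_j}(U)$ conjugate the return maps along the cycle, and then use $N(\phi^q,U)=2+q(m-2)\ge 2$ to force $m\ge 2$. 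As stated, applying Theorem \ref{fp-number} to $\phi$ itself gives nothing, since no $\xi_j$ is fixed by $\phi$.

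The genuine gap is in (4). First, your intermediate claim that $\mathcal{O}'\subset B_0$ is false: being repelling only forces the single point $\xi'=\mathcal{O}'\cap\mathcal{R}_\phi$ to lie in $B_0$, and the paper's own example (the $3$-cycle $\xi_0'\mapsto\xi_1'\mapsto\xi_2'$, with $\xi_1'$ lying in the ball $B_1$ attached to $\xi_1$) shows the rest of the cycle can escape $B_0$; consequently one cannot simply view $\mathcal{O}'$ inside ``the Julia set of $T_{\xi_0}\phi^q$.'' More seriously, the three substantive assertions of (4) --- the bound of two repelling cycles, the strict inequality $q'>q$, and the unicriticality of $T_{\xi'}\phi^{q'}$ together with the existence of the critical Fatou ball $B'$ with $\partial B'\subset\mathcal{O}'$ --- are asserted via heuristics (``each cycle must anchor one of the two critical orbits'') rather than proved. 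The paper does not prove them from scratch either: after observing that the return maps at points of $\mathcal{O}$ and $\mathcal{O}'$ are bicritical, it imports them from \cite[Theorem 2]{Arfeux16b} and \cite[Proposition 3.4]{Kiwi15}. A self-contained version of your argument would have to supply the content of those results --- in particular why a second repelling cycle forces a critical point of $\phi$ into a non-fixed periodic Fatou ball whose boundary lies in $\mathcal{O}'$, and why this exhausts the available critical points --- which is exactly the bookkeeping you flag as the ``main obstacle'' without resolving it.
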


\begin{proof}
Statements $(1)-(2)$ follow from the proofs of  conclusion $(4)$ in \cite[Theorem 2]{Kiwi14} and conclusions $(1)-(2)$ in \cite[Lemma 5.1]{Kiwi14}.  Note for $\xi_j\in\mathcal{O}$ and $\xi'\in\mathcal{O}'$ (if it exists) the maps $T_{\xi_j}\phi^q$ and $T_{\xi'}\phi^{q'}$ each have exactly two critical points. Then statements $(3)-(4)$ are consequences of \cite[Theorem 2]{Arfeux16b} and \cite[Proposition 3.4]{Kiwi15}.
\end{proof}

To end this section, we illustrate Proposition \ref{repelling} by the following example.  Below,  we let $\xi_{a,|t^r|}\in\mathbf{P}^1$ be the point corresponding to the  closed ball in $\{z : |z-a|\leq |t^r|\}\subset\mathbb{L}$.  Recall that $\mathbf{P}^1$ is a tree; we use the notation $[x,y]$, $(x,y)$ to denote respectively the closed and open  segments joining $x,y \in \mathbf{P}^1$.
\begin{example}
For $d\ge 2$ consider the bicritical rational map
$$\phi(z)=1+\frac{t}{1-z^d+g(t)}\in\mathbb{L}(z),$$
where $g(t)=\sum_{n\ge 1}a_nt^n\in\mathbb{L}$.  Put $\xi_0:=\mathbf{g}=\xi_{0, |1|}$ and $\xi_1:=\xi_{1, |t|}$.  A computation shows that $\xi_0\mapsto \xi_1 \mapsto \xi_0$ is a repelling $2$-cycle, so here $q=2$ and the boundary of the fixed Rivera domain $U$ is $\partial U=\{\xi_0, \xi_1\}$.
Denote $G(z)=T_{\bg}\phi^2(z)$.  Direct computation of $\phi^2(z)$ and then setting $t=0$ and cancelling shows that
$$G(z)=\frac{(a_1+1)z^d+d-a_1-1}{a_1z^d+d-a_1}$$
and $G(1)=1$ with $G'(1)=1$.  The holes of the degenerate map of degree $d^2$ corresponding to $G$ consist of the indifferent fixed-point $\hat{z}=1$ together with a set $\Lambda$ of $d-1$ other points $h$ satisfying $G(h)=1, G'(h) \neq 0$.

We now show that by judicious choice of the coefficients $a_1, a_2, \ldots$, there exists a repelling 3-cycle $\xi_0' \mapsto \xi_1' \mapsto \xi_2' \mapsto \xi_0'$ with $\xi_0' \in [\bg,0]$.  To do this, we start with the more modest goal of identifying parameters for which the orbit of the origin $x_0:=0$ under $\phi$ satisfies $x_3:=\phi^3(0)=O(t)$; note that this implies $x_3\in \vec{v}_{\bg}(0)$. Computations show
\[ x_1:=\phi(0)=1+t+O(t^2),\;\; x_2:=\phi^2(0)=1+\frac{1}{d+a_1}+O(t).\]
To obtain $x_3\in \vec{v}_{\bg}(0)$ we must arrange so that $x_2$ lies in a bad direction of $T_\bg\phi^2$.  This corresponds to $1+\frac{1}{d+a_1} \in \Lambda$ and we achieve this by picking some $h \in \Lambda$, solving for $a_1$ in terms of $h$, and setting $a_1$ to this value.  To control the next image, we look at the denominator in the defining expression for $\phi$.  We find
\[ 1-(x_2)^d+g=ct+O(t^2)\]
where $c$ depends on $a_2$.   Then
\[ x_3=1+\frac{t}{ct+O(t^2)}=1+\frac{1}{c}+O(t)\]
and by appropriate choice of $a_2$ we find $1+\frac{1}{c}=0$ as required.  We are looking for an appropriate power $0<r\leq 1$ so that $\xi_0'=\xi_{0, |t^r|}$ is periodic of period $3$ and is deployed as shown in the diagram below, where $\mathrm{Hull}(\partial U)$ is indicated with wiggly edges. Since $x_3=O(t)$, we have that $x_3\in\mathbb{L}$ is in the closed ball corresponding to $\zeta=\xi_{0, |t|}\in \mathbf{P}^1$. In summary, we have the following picture:
\[
\xymatrix{
x_2\ar@{-}[d]	&\infty\ar@2{-}[dd]	&	&x_1	\ar@{-}[d]	\\
\xi_2'&	&	&	\xi_1'		\ar@{-}[d]	\\
	&\bg=\xi_0\ar@{-}[ul]\ar@{~}[r]	\ar@2{-}[d]&	\bc\ar@{~}[r]&	\xi_1		\\
	&\xi_0'\ar@2{-}[d]	&	&			\\
		&\zeta	\ar@2{-}[d]&	&\\
x_3\ar@{-}[ur]	&x_0	=0&	&			\\	
}
\]
Assuming $r$ is so chosen, the map $\phi$ induces homeomorphisms of segments
\[ [\xi_0, \xi_0'] \to [\xi_1, \xi_1'] \to [\xi_0, \xi_2'] \to [\xi_1, \xi_0'].\]
Let $\rho$ denote the hyperbolic length metric on $\mathbb{H}$, and $\ell$ the length of a path with respect to $\rho$, see \cite{Baker10}. Appropriately normalized we have $\rho(\xi_0, \xi_1)=1$ and $\rho(\xi_{0,1}, \xi_{0, |t^r|})=r$. This metric has the property that $\ell(\phi([a,b])=\ell(a,b)$ if $(a,b)\cap \mathcal{R}_\phi=\emptyset$, and for the bicritical maps considered here, $\ell(\phi([a,b])=d\cdot \ell([a,b])$ if $[a,b] \subset \mathcal{R}_\phi$.  Exploiting this and the fact that
\[ [\xi_1, \xi_0'] = [\xi_1, \xi_0] \cup [\xi_0, \xi_0']\]
we see that the power $r$ must be chosen so that
\[ d\cdot \ell([\xi_0', \xi_0])=\ell([\xi_0', \xi_0])+\ell([\xi_0, \xi_1])\iff dr+1=r\iff r=\frac{1}{d-1}.\]
%
%
%

At $\xi_0'$, the directions corresponding to $0, \infty$ are invariant under the degree $d$ map $T_{\xi'_0}\phi^3$ and have multiplicity each equal to $d$.  It follows that  $T_{\xi_0'}\phi^3$ is conjugate to the unicritical polynomial $z^d$.

When $d=3$, one may take
\[ a_1=\frac{7+3\sqrt{3}i}{3+\sqrt{3}i}, \;\; a_2=-\frac{2(49+25\sqrt{3}i)}{9(3+\sqrt{3}i)}, \;\; a_k=0, k>2.\]

\end{example}

\section{Proof of Theorem \ref{main}}

\subsection{Unboundedness via a degenerating holomorphic family} Here, we show that an ideal boundary point of a type $D$ hyperbolic component $\mathcal{H}$, if it exists, is accessible through a holomorphic family.

A bicritical rational map of type $D$ has two distinct critical points; it also has a non-critical fixed-point. By conjugating so that the two critical points are at zero and infinity and a fixed-point is at $z=1$, It follows that any such map is conjugate to one of finitely many in the two-complex-dimensional algebraic family
\[ \mathcal{F}:=\left\{ \frac{\alpha z^d + \beta}{\gamma z^d + \delta}: \alpha\delta - \beta\gamma = 1, \; \alpha+\beta=\gamma+\delta \right\}\subset \mathrm{Rat}_d.\]
In suitable affine coordinates on the locus $\alpha+\beta=\gamma+\delta$, the family $\mathcal{F}$ becomes a quadric surface in $\mathbb{C}^3$. A generic line intersects $\mathcal{F}$ in two points. Projection from a generic point on $\mathcal{F}$ onto a generic hyperplane in $\mathbb{P}^3$ yields a birational map $\mathcal{F} \to \mathbb{C}^2$. Explicitly, for $u,v \in \mathbb{C}$, set
\[ f_{u,v}(z):={\frac {(  (u+2) v+{u}^{2}+2\,u+2 ) {z}^{d}+u
v+{v}^{2}-1}{ ( -{v}^{2}+uv+ \left( u+1 ) ^{2} \right) {z}^
{d}+ ( u+2\,v+2) v}}=\frac{Az^d+B}{Cz^d+D}.\]
Noting that $AD-BC=\left( v+1 \right)^2  \left( u+v+1 \right)^2$, we conclude
\[ \mathcal{F}\simeq \{f_{u,v} : v +1 \neq 0, u+v+1\neq 0\}\subset \mathbb{C}^2_{u,v}.\]
We compactify this family as $\overline{\mathcal{F}}:=\{[u:v:r] \in \mathbb{P}^2\}$ by adding to $\mathcal{F}$ the \emph{degeneracy locus} $\Delta$ consisting of the line at infinity and the two lines where the resultant vanishes:
\[ \Delta:=\{r=0, v+r=0, u+v+r=0\}.\]

Let $\mathcal{M}_d^\sharp$ be the moduli space of critically marked bicritical rational maps. The forgetful map $\mathcal{M}_d^\sharp \to \mathcal{M}_d$ is proper, so a hyperbolic component in $\mathcal{M}_d$ is bounded if and only if its lift to $\mathcal{M}_d^\sharp$ is bounded. Suppose now $\mathcal{H} \subset \mathcal{M}^\sharp_d$ is a hyperbolic component of type $D$ and $\widetilde{\mathcal{H}} \subset \mathcal{F}$ is its lift to the family $\mathcal{F}$.  The two critical points in $\mathcal{F}$, being located at the origin and at infinity, are therefore marked, hence so are the two corresponding attracting cycles. The multipliers $\lambda, \mu$ of these cycles are complex algebraic functions which are well-defined on $\widetilde{\mathcal{H}}$, yielding an isomorphism
\[\widetilde{\mathcal{H}} \to \mathbb{D}\times\mathbb{D}\]
given by
\[ f_{uv} \mapsto (\lambda(u,v), \mu(u,v)).\]
 We conclude that $\widetilde{\mathcal{H}}$ is a real-algebraic domain in $\mathcal{F}\subset \mathbb{C}_{u,v}$.

We now specialize to the case that $\widetilde{\mathcal{H}}$ is strict type $D$, and we suppose, contrary to the conclusion of Theorem \ref{main}, that the image of $\widetilde{\mathcal{H}}$ in moduli space is unbounded.  Then $\widetilde{\mathcal{H}}$ meets one of the lines in $\Delta$ at some point. In suitable complex affine coordinates $(x,y)$, this point is the origin, and $\widetilde{\mathcal{H}}$ is described by two real-algebraic inequalities.  The Curve Selection Lemma \cite[Lemma 3.1]{Milnor68} implies that there is a real-analytic curve $t \mapsto (x(t), y(t)), 0 \leq t \leq 1$, such that $x(0)=y(0)=0$ and $(x(t),y(t)) \in \widetilde{\mathcal{H}}$ for $0<t\leq 1$. Complexifying this curve as in \cite{Nie-P}, we conclude: \emph{if $\mathcal{H}$ is unbounded, then there exists a holomorphic family $t \mapsto f_{t}:=f_{u(t), v(t)}$ such that for some sequence of parameters $t_k \to 0$, the corresponding maps $f_{t_k}$  belong to $\widetilde{\mathcal{H}}$ and diverge in the moduli space $\mathrm{rat}_d$ as $k \to \infty$}.

\subsection{Bounded multiplier implies existence of a rescaling limit}

We say a holomorphic family $\{f_t\}$ of rational maps is \textit{truly degenerate} if the induced map $\mathbf{f}\in\mathbb{L}(z)$ is not simple. If a sequence $\{f_{t_k}\}$, arising from a holomorphic family $\{f_t\}$ of degree $d\ge 2$ rational maps, diverges in the moduli space $\mathrm{rat}_d$, that is $[f_{t_k}]\to\infty$, then $\{f_t\}$ is truly degenerate.

Here, we show that existence of a bounded multiplier for a truly degenerating family implies the existence of a rescaling limit.

\begin{lemma}\label{repelling-existence}
Let $\{f_t\}$ be a truly degenerate holomorphic family of bicritical rational maps. Assume that there is an $n$-cycle $\langle z(t_k)\rangle$ of $f_{t_k}$ with period $n\ge 2$ and bounded multiplier as $t_k\to 0$. Then the induced map $\mathbf{f}$ has a type II repelling cycle.
\end{lemma}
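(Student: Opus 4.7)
The plan is to argue by contradiction using the trichotomy of Proposition~\ref{Julia-cycle}. Assume $\mathbf{f}$ has no type II repelling cycle. Since $\{f_t\}$ is truly degenerate, $\mathbf{f}$ is not simple, so we are in case (1) or case (2) of that proposition. I will then show that in each case the $n$-cycle $\langle z(t)\rangle$, regarded as an $n$-cycle of type I points of $\mathbf{f}$ in $\mathbb{P}^1_{\mathbb{L}}$, cannot exist with $n\ge 2$ and bounded multiplier.

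View $\langle z(t)\rangle$ as an $n$-cycle of $\mathbf{f}$ with multiplier $\lambda\in\mathbb{L}$. Boundedness of the complex multiplier as $t_k\to 0$ translates on the Puiseux side into the non-Archimedean bound $|\lambda|_{\mathbb{L}}\le 1$. By the standard non-Archimedean dichotomy (Rivera-Letelier), every type I periodic point in $J(\mathbf{f})$ is repelling with $|\lambda|_{\mathbb{L}}>1$, so the cycle must sit in the Fatou set $F(\mathbf{f})$; in particular each cycle point lies in a periodic Fatou component of period dividing $n$.

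In case (1), Proposition~\ref{attracting} identifies $F(\mathbf{f})$ with the immediate basin of a single attracting fixed point, whose only periodic point is the fixed point itself; this is incompatible with $n\ge 2$. In case (2), Proposition~\ref{indifferent} says every periodic Fatou component is a fixed Rivera domain, so all $n$ cycle points must lie in one single fixed Rivera domain $U$.

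The main anticipated obstacle is closing case (2). My plan is to exploit bicriticality together with the tight structure forced by Proposition~\ref{indifferent}: the bijection $\mathbf{f}|_U\colon U\to U$ rules out critical points inside $U$, and $\partial U$ is constrained by the unique indifferent cycle $\mathcal{O}\subset J(\mathbf{f})\cap\mathbb{H}$. Applying Theorem~\ref{fp-number} to $\mathbf{f}^n$, for which $U$ is still a fixed Rivera domain, one should count the type I fixed points of $\mathbf{f}^n$ in $U$ tightly enough to exclude a genuine $n$-cycle of period $n\ge 2$ with $|\lambda|_{\mathbb{L}}\le 1$, producing the contradiction needed to complete the argument.
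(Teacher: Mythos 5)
Your proposal follows the paper's proof essentially verbatim: translate the bounded multiplier into a nonrepelling type I cycle of $\mathbf{f}$, place it in the Fatou set, rule out case (1) via Proposition~\ref{attracting}, and rule out case (2) by applying Theorem~\ref{fp-number} to $\mathbf{f}^n$ on the fixed Rivera domain $U$, where the indifferent boundary cycle forces $m_\xi(U)\le 1$ and hence $N(\mathbf{f}^n,U)<2$, contradicting $N(\mathbf{f}^n,U)\ge n\ge 2$. The one step you leave as a ``plan'' (the tight fixed-point count in case (2)) closes exactly as you anticipate, so the approach is correct and matches the paper's.
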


We remark that Proposition \ref{repelling} shows that the period $q$ of this repelling cycle satisfies $1<q$; in Corollary \ref{Rivera-boundary-period} below we refine this to establish $1<q\leq n$, though this fact is not needed in our proof.

\begin{proof}
The elements of an $n$-cycle of $f_t$ are algebraic functions of the parameter $t$ and define elements of $\mathbb{P}^1_{\mathbb{L}}$ comprising an $n$-cycle $\langle \mathbf{z}\rangle$ of $\mathbf{f}$.

Now suppose the multiplier of some $n$-cycle of $f_{t_k}$ is bounded as $k \to \infty$. Passing to a subsequence, we may assume these cycles are parameterized by a common algebraic function $\langle z(t)\rangle$; we let $\lambda(t)$ denote the multiplier of this cycle. Since $\lambda(t)$ is algebraic and $\lambda(t_k)$ is bounded as $t_k \to 0$, the function $\lambda(t)$ is bounded as $t \to 0$.  It follows that
 $\langle\mathbf{z}\rangle\subset\mathbb{P}^1_{\mathbb{L}}$ is an attracting or indifferent $n$-cycle of $\mathbf{f}$. For each of these two possibilities, we consider the three cases in the conclusion of Proposition \ref{Julia-cycle}, and we will rule out the first two.  Thus the third--the existence of a type II repelling cycle and, hence, of a rescaling limit--must hold.

Suppose $\langle\mathbf{z}\rangle$ is attracting. The cycle $\langle\mathbf{z}\rangle$ is in the Fatou set of $\mathbf{f}$.  If the first case of Proposition \ref{Julia-cycle} holds--that is, there are no periodic cycles in $\mathbb{H}$--then Proposition \ref{attracting} implies that  the entire Fatou set of $\mathbf{f}$ is the immediate basin of an attracting fixed point of $\mathbf{f}$; which is impossible since $n \geq 2$.  If the second case holds, we have an indifferent cycle $\langle \xi \rangle \subset \mathbb{H}$, and Proposition \ref{indifferent} implies that each Fatou component of $\mathbf{f}$ is a fixed Rivera domain. As in our argument for the first case, this contradicts the existence of an attracting $n$-cycle.

Now suppose $\langle\mathbf{z}\rangle$ is indifferent.   The cycle $\langle\mathbf{z}\rangle$ is in the Fatou set of $\mathbf{f}$; see \cite[Proposition 4.24]{Benedetto10} and \cite[Theorem 10.67]{Baker10}.  The first case is ruled out by arguing as in the previous paragraph using  Proposition \ref{attracting}.  We conclude $J(\phi)\cap\mathbb{H}$ contains a cycle.
Next, suppose we are in the second case, so that say $\langle \xi \rangle$ is the unique cycle in $J(\phi)\cap\mathbb{H}$ and is indifferent.
Let $U$ be a Fatou component of $\mathbf{f}$ which contains a point in $\langle\mathbf{z}\rangle$. Then $U$ is periodic, and hence a fixed Rivera domain by Proposition \ref{indifferent}. So $\langle\mathbf{z}\rangle\subset U$.  The boundary $\partial  U$ is also a cycle in $J(\phi) \cap \mathbb{H}$ and so since there is exactly one such cycle we have $\partial U = \langle \xi \rangle$ is indifferent. Hence for each point $\xi\in\partial U$, the degree $\deg T_{\xi}\mathbf{f}^n=1$, and hence the multiplicity of the direction $\vec{v}_{\xi}(U)\in T_{\xi}\mathbf{P}^1$ as a fixed point of $T_\xi\mathbf{f}^n$ is at most $1$. Now consider the number $N(\mathbf{f}^n,U)$ of fixed points of $\mathbf{f}^{n}$ in $U\cap\mathbb{P}^1_\mathbb{L}$. Thus by Theorem \ref{fp-number}, we have $N(\mathbf{f}^n,U)<2$. On the other hand, since each point in $\langle\mathbf{z}\rangle$ is a fixed point of $\mathbf{f}^n$, it follows that $N(\mathbf{f}^n,U)\ge n\ge 2$. This is impossible.
\end{proof}

\subsection{Coarse structure of the Berkovich dynamics}

Lemma \ref{repelling-existence} provides the existence of a rescaling limit. Our next result, Lemma \ref{lemma:coarse-structure}, applies Proposition \ref{repelling} and some basic combinatorial arguments to describe how the Rivera domain $U$ is deployed relative to other dynamical features. Though Lemma \ref{lemma:coarse-structure} describes two cases, our subsequent arguments do not distinguish between them. In the figures below, the ramification locus is indicated with doubled edges, and the edges comprising the simplicial tree $\mathrm{Hull}(\partial U)$ are indicated with wiggly edges.

\begin{lemma}\label{lemma:coarse-structure}
Let $\{f_t\} \subset\overline{\mathcal{F}}$ be a truly degenerate holomorphic family of bicritical rational maps. Assume that there is an $n$-cycle $\langle z(t_k)\rangle$ of $f_{t_k}$ with period $n\ge 2$ and bounded multiplier as $t_k\to 0$. Then the induced map $\mathbf{f}$ has a fixed starlike Rivera domain $U$ with center $\bc$ a type II indifferent  fixed point, and $\partial U = \langle \bg=\xi_0, \xi_1, \ldots, \xi_{q-1}\rangle$ a type II repelling $q$-cycle. Furthermore, either
\begin{enumerate}
\item $\bg \in (\bc, 1)$:
\[
\xymatrix{
& \xi_1\ar@{~}[d] & \infty \ar@2{-}[d]& \\
\xi_2 \ar@{~}[r] & \bc \ar@{~}[r] \ar@{~}[d]&\bg \ar@{-}[r] \ar@2{-}[d]& 1\\
& \xi_{q-1}& 0 & \\
}
\]
 or
\item $\bc \in (\bg, 1)$:
\[
\xymatrix{
\infty  \ar@2{-}[d] &  \xi_1 & p\\
\bg \ar@2{-}[d] \ar@{~}[r] & \bc  \ar@{~}[d] \ar@{-}[r] \ar@{~}[u] \ar@{-}[ur]& 1\\
0 & \xi_{q-1}\\
}
\]

In this case, we have
\begin{enumerate}
\item $(\bg, 1] \subset U$;
\item $\mathbf{f}$ sends the direction $\vec{v}_{\bc}(1)$ onto itself via a homeomorphism;
\item the set $U\cap\mathbb{P}^1_{\mathbb{L}}$ contains exactly $2$ fixed points of $\mathbf{f}$, the point $1$ and a point $p \neq 1$, each of multiplicity $1$;
\item $\bc\in(p,1)$ and $[\bc, p] \cap [\bc, 1] = \{\bc\}$.
\end{enumerate}
\end{enumerate}
\end{lemma}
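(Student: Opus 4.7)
The plan is to combine Lemma \ref{repelling-existence} and Proposition \ref{repelling} with a positional analysis of the Gauss point $\bg$ and the fixed point $1$ inside the Berkovich tree.

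First, Lemma \ref{repelling-existence} produces a type II repelling cycle for $\mathbf{f}$, so Proposition \ref{repelling} applies and yields the unique fixed starlike Rivera domain $U$, the type II repelling $q$-cycle $\partial U = \{\xi_0,\ldots,\xi_{q-1}\}$ (labeled so that $\xi_0$ is the unique point of $\partial U$ at which $\mathbf{f}$ has local degree $d$), and the star $\mathrm{Hull}(\partial U)$ with fixed center $\bc$; since $T_\bc\mathbf{f}$ is a rotation of order $q>1$, it has degree one, so $\bc$ is a type II indifferent fixed point. The identification $\bg = \xi_0$ uses that the family sits in $\overline{\mathcal{F}}$, so the critical points of $\mathbf{f}$ are at $0$ and $\infty$ and $\mathcal{R}_\mathbf{f} = [0,\infty]$ by Lemma \ref{ramification}; the Gauss point $\bg$ lies on this segment with $\deg T_\bg\mathbf{f} = d$, and since $\mathbf{f}$ is not simple, $\bg$ is not fixed. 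One then shows $\bg \in \mathcal{O}$ (excluding membership in the alternative repelling cycle $\mathcal{O}'$ of Proposition \ref{repelling}(4) because that would force a critical point into a Fatou ball attached to $\mathcal{O}'$, contradicting Proposition \ref{repelling}(2), which places both critical points in $B_0$); the degree condition then yields $\bg = \xi_0$.

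Second, the dichotomy is a direct analysis of where $\vec{v}_\bc(1) \in T_\bc \mathbf{P}^1$ sits. At $\bc \notin \mathcal{R}_\mathbf{f}$, Lemma \ref{ramification}(3) identifies the unique bad direction as $\vec{v}_\bc(\xi_0) = \vec{v}_\bc(\bg)$, the one containing $\mathcal{R}_\mathbf{f}$. If $\vec{v}_\bc(1)$ is a good direction, then the formula $T_\bc\mathbf{f}(\vec{v}_\bc(1)) = \vec{v}_\bc(\mathbf{f}(1)) = \vec{v}_\bc(1)$ applies and exhibits $\vec{v}_\bc(1)$ as one of the two fixed directions of the Möbius rotation $T_\bc\mathbf{f}$; since $\vec{v}_\bc(1) \neq \vec{v}_\bc(\bg)$, the point $\bc$ separates $\bg$ from $1$, giving Case (2). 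If $\vec{v}_\bc(1) = \vec{v}_\bc(\bg)$ is the bad direction, then because $1$ is a type I endpoint of the tree (so it cannot be interior to a segment), the median of $\{\bc, \bg, 1\}$ is either $\bg$ or an interior branch point $m \in (\bc, \bg)$. The latter I would rule out: such $m$ would be $q$-periodic type II in $U$ (since $\mathbf{f}$ acts as a hyperbolic isometry on the hull edge $[\bc,\xi_0]$, so $\mathbf{f}^q$ restricts to the identity there); but from $m_1 := \mathbf{f}(m) \in (\bc,\xi_1)$ the only path to $1$ runs through $\bc$, forcing $\vec{v}_{m_1}(1) = \vec{v}_{m_1}(\bc)$, and applying injectivity of the Möbius $T_m\mathbf{f}$ to the identities $T_m\mathbf{f}(\vec{v}_m(\bc)) = \vec{v}_{m_1}(\bc) = T_m\mathbf{f}(\vec{v}_m(1))$ forces $\vec{v}_m(1) = \vec{v}_m(\bc)$, contradicting the off-hull branch to $1$ at $m$. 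Hence the median is $\bg$, i.e., $\bg \in (\bc, 1)$; this is Case (1).

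Third, in Case (2) the properties (a)--(d) are immediate. (a) The segment $[\bg,1]$ enters $U$ past $\bg$, passes through $\bc$, and terminates at $1$, never crossing $\partial U$ again, so $(\bg,1] \subset U$. (b) Since $\mathbf{f}|_U$ is a bijection and $\vec{v}_\bc(1)$ is a good fixed direction of the Möbius $T_\bc\mathbf{f}$, $\mathbf{f}$ restricts to a homeomorphism of the Berkovich ball $\vec{v}_\bc(1)$ onto itself. (c) Theorem \ref{fp-number} applied to $U$ gives $N(\mathbf{f}, U) = 2$ since $\partial U = \mathcal{O}$ is a $q$-cycle with $q>1$ and contains no fixed points; the two simple fixed points of $U\cap\mathbb{P}^1_\mathbb{L}$ are $1$ together with a second point $p \neq 1$. (d) Since the second fixed direction of the rotation $T_\bc\mathbf{f}$ must equal $\vec{v}_\bc(p)$ and differs from $\vec{v}_\bc(1)$, one concludes $[\bc,p] \cap [\bc,1] = \{\bc\}$ and $\bc \in (p,1)$. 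The main obstacle is the identification $\bg = \xi_0$: while very natural from the degree condition, rigorously placing $\bg$ on $\mathcal{O}$ rather than on $\mathcal{O}'$ (or having it wander or be pre-periodic) requires careful bookkeeping of the bicritical structure and the location of both critical points in $B_0$.
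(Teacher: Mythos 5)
The central claim of this lemma---and the step your outline does not actually prove---is that the Gauss point satisfies $\bg=\xi_0\in\partial U$. Your sketch presupposes that $\bg$ is periodic and then only debates whether it lies on $\mathcal{O}=\partial U$ or on the second repelling cycle $\mathcal{O}'$; nothing you say excludes $\bg$ being wandering, strictly preperiodic, or in the Fatou set, and you flag this yourself as ``the main obstacle'' without resolving it. Moreover, your proposed exclusion of $\mathcal{O}'$ is not a contradiction: the Fatou ball $B'$ with $\partial B'\subset\mathcal{O}'$ furnished by Proposition \ref{repelling}(4) is itself contained in $B_0$, so a critical point lying in $B'$ is perfectly compatible with both critical points lying in $B_0$. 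Note also that the normalization ``critical points at $0$ and $\infty$'' alone cannot suffice: conjugating by $z\mapsto t^{a}z$ preserves $\{0,\infty\}$ but slides $\bg$ along $[0,\infty]$, so without invoking the third marked point the claim $\bg\in\partial U$ is simply false. The paper's proof uses exactly this extra datum: since $f_t\in\mathcal{F}$ fixes $z=1$, if the unique point $\bx$ of $\partial U\cap[0,\infty]$ were distinct from $\bg$, then at the degree-$d$ tangent map $T_{\bx}\mathbf{f}\colon T_{\bx}\mathbf{P}^1\to T_{\by}\mathbf{P}^1$ (with $\by=\mathbf{f}(\bx)$) the direction $\vec{v}_{\bx}(1)=\vec{v}_{\bx}(\bg)$ contains a critical point and so maps to $\vec{v}_{\by}(1)$ with multiplicity $d$, while the distinct direction $\vec{v}_{\bx}(\bc)$ also maps to $\vec{v}_{\by}(\bc)=\vec{v}_{\by}(1)$; this produces $d+1$ preimages of a single point under a degree-$d$ map, a contradiction. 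You need this argument (or some other genuine use of the fixed point $1$) before the rest of your outline can get off the ground.

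Granting $\bg=\xi_0$, the remainder of your argument is essentially sound. Your derivation of the dichotomy---classifying $\vec{v}_{\bc}(1)$ as good or bad and ruling out an intermediate median $m\in(\bc,\bg)$ via the injectivity of $T_m\mathbf{f}$---is a valid route that differs from the paper's, which instead derives a contradiction from the forced overlap of the initial segments $(\bc,\xi_0]$ and $(\bc,\xi_1]$ along $(\bc,1)$; and your treatment of items (a)--(d) matches the paper's.
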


\begin{proof} Since $f_t$ is assumed truly degenerate, the induced map $\mathbf{f}$ is not simple. The bounded multiplier hypothesis implies, by Lemma \ref{repelling-existence}, the existence of a type II repelling cycle. Proposition \ref{repelling} then gives the existence of a fixed starlike Rivera domain $U$;  we adopt the notation in the statement for the center and boundary of $U$.

We first show $\bg=\xi_0 \in \partial U$. By Proposition \ref{repelling}(3), there is a unique point, denoted by $\bx$, in the intersection of $\partial U$ and the ramification locus $\mathcal{R}_{\mathbf{f}}=[0, \infty]$.  Suppose $\bx \neq \bg$. The center $\bc$ cannot lie on the segment $[\bx,\bg]\cup[\bg,1]$. We have the following configuration in $\mathbf{P}^1$, with the double arrow indicating a subsegment of the ramification locus:
\[
\xymatrix{
 \mathbf{f}(\bx)=\by \ar@{-}[r] & \bc \ar@{-}[r]  & \bx\ar@2{-}[r]&\bg\ar@{-}[r]&1=\mathbf{f}(1).
}
\]

Recall $T_\bx\mathbf{P}^1, T_\by\mathbf{P}^1 \simeq \mathbb{P}^1$.  The map $T_\bx\mathbf{f}: T_\bx\mathbf{P}^1\to T_\by\mathbf{P}^1$ is rational and has degree $d$. By Lemma \ref{ramification}(3), each direction at $\bx$ is a good direction. Since $\mathbf{f}(1)=1$,  the direction $\vec{v}_{\bx}(1)$ maps to the direction $\vec{v}_{\by}(1)$ with local degree $d$ under $T_x\mathbf{f}$. But by Proposition \ref{repelling}(2), $\mathbf{f}$ maps the segment $[\bc,\bx]$ to the segment $[\bc,\by]$. So the direction $\vec{v}_{\bx}(\bc)$ also maps to the direction $\vec{v}_{\by}(\bc)=\vec{v}_{\by}(1)$. This implies the rational map $T_{\bx}\mathbf{f}: T_{\bx}\mathbf{P}^1\to T_{\by}\mathbf{P}^1$ has at least $d+1$ preimages of the point corresponding to the direction $\vec{v}_{\by}(1)$, which is impossible.

Now suppose we are not in Case (1), and that to the contrary we have $\bc \not\in (\bg, 1)$.  Let $\bv$ be the projection of $\bc$ onto $(\bg,1)$ and let $\bw=\mathbf{f}(\bv)$:
\[
\xymatrix{
& \bc\ar@{~}[d]\\
\bg\ar@{~}[r] & \bv \ar@{-}[r]& 1
}
\]
In the starlike Rivera domain $U$, the intervals $(\bc, \xi_i], (\bc,\xi_j]$ are disjoint when $i\neq j$.
The restriction $\mathbf{f}: [\bc,1] \to [\bc, 1]$ is  a homeomorphism fixing each endpoint, since this segment is outside the ramification locus and the endpoints are fixed.  The restriction $\mathbf{f}: (\bc,\xi_0=\bg] \to (\bc, \xi_1]$ is also homeomorphism. In particular it is a homeomorphism from $(\bc,\bv]$ to its image $(\bc, \bw]$.  Thus $\bw \in (\bc, 1)$. The segments $(\bc, \bg=\xi_0]$ and $(\bc, \xi_1]$ must then overlap in a nonempty subsegment of $(\bc,\bv]$; this is impossible.

It remains to verify the claims \emph{(a)-(d)} in Case (2). The direction $\vec{v}_{\mathbf{c}}(1)$ is a good direction. The segment $[\bc, 1]$ has fixed endpoints and does not meet the ramification locus, hence it maps to itself by a homeomorphism, verifying \emph{(b)}. This observation also shows $\emph{(a)}$: otherwise, some $\xi_i \in \partial U \cap (\bc,1)$, $i \neq 0$, whence $\xi_i$ cannot iterate to $\bg = \xi_0$.  By Theorem \ref{fp-number}, the map $\mathbf{f}$ has exactly $2$ fixed points in $U\cap\mathbb{P}^1_{\mathbb{L}}$. Note $T_\bc\mathbf{f}$ is a rotation of order $q\ge 2$. Then $T_\bc\mathbf{f}$ fixes $2$ distinct directions, and \emph{(c),(d)} follow.

\end{proof}

\subsection{Structure of the rescaling limit}  In this subsection, we let $\{f_t\}$ be a holomorphic family such that $f_t\in\mathcal{F}$ if $t\not=0$ and satisfies the assumptions in the Lemma \ref{repelling-existence}. Let $U$ be the fixed Rivera domain given by Lemma \ref{lemma:coarse-structure}.   Then the Gauss point $\mathbf{g}=\xi_0$ lies in the cycle $\partial U=\langle \xi_0, \ldots, \xi_{q-1}\rangle$ of $\mathbf{f}$ and so is fixed under $\mathbf{f}^q$.  Since this cycle is repelling, the reduction of $\mathbf{f}^q$ at $\mathbf{g}$ has degree at least two. It follows that $f_t^q$ converges locally uniformly away from a finite set to a nonlinear rational map $G$; see \cite[Proposition 3.4]{Kiwi15}. In this subsection, we analyze the structure of $G$.
%
%
%

There is a unique isomorphism $T_{\bg}\mathbf{P}^1\to \mathbb{P}^1$ sending the directions at $\bg$ corresponding to $0, \infty, 1 \in \mathbb{P}^1_{\mathbb{L}}$ to the points $0, \infty, 1 \in \mathbb{P}^1$, respectively.  We denote by $G(z)=T_{\bg}\mathbf{f}^q(z)$ the corresponding rational map.  With these normalizations, $G \in \mathcal{F}$.  Lemma \ref{lemma:coarse-structure} and Proposition \ref{repelling}(3) imply the direction $\vec{v}_{\bg}(U)$ corresponds to a multiple fixed point $\hat{z}$ of $G$.  So $\hat{z}\in \mathbb{P}^1, G(\hat{z})=\hat{z}, G'(\hat{z})=1$.
Lemma \ref{lemma:coarse-structure} gives the location of $\hat z$. Indeed, in case (1) of Lemma \ref{lemma:coarse-structure}, $\hat{z}\neq 1$; in case (2),  $\hat{z}=1$.

Expressing $f_t^q$ and $G$ in projective coordinates as a pair of homogeneous degree $d$ polynomials, we have $f_t^q \to H\cdot G$ where $H$ is a homogeneous polynomial whose roots--called the \emph{holes} of the limit--correspond to points where the convergence $f_t^q \to G$ fails to be locally uniform; see \cite[Lemma 4.2]{DeMarco05}. By Proposition \ref{repelling} (2), the bad directions of $\mathbf{f}^q$ at the Gauss point $\bg$ are the directions corresponding to points in $G^{-1}(\hat z)$. By \cite[Lemma 3.17]{Faber13I}, the set of holes of the degenerate rational map corresponding to $G$ is the set $G^{-1}(\hat z)$. Recalling that $\hat{z}$ is fixed, we write the set of holes as $G^{-1}(\hat{z})=\{\hat{z}\}\cup \Lambda$, so that $\Lambda:=G^{-1}(\hat{z})-\{\hat{z}\}$.

Our first result asserts that the holes are disjoint from the critical points of $G$.

\begin{lemma}\label{preimage-simple}
Let $G$ and $\hat z$ be as above. Then
$$G^{-1}(\hat z)\cap\{0, \infty\}=\emptyset.$$
\end{lemma}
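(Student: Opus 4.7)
The plan is a short degree-count argument that exploits the bicriticality of $G$ together with the fact that $\hat z$ is a fixed point of multiplier $1$.

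First I would pin down the critical structure of $G$. Since every member of $\mathcal{F}$ has the form $(\alpha z^d+\beta)/(\gamma z^d+\delta)$, a direct computation gives $G'(z)=d(\alpha\delta-\beta\gamma)z^{d-1}/(\gamma z^d+\delta)^2$, so the only critical points of $G$ are $0$ and $\infty$, each of multiplicity $d-1$; in particular $G$ has local degree $d$ at each of these two points. The same conclusion follows from the chain rule applied to $G=T_{\xi_{q-1}}\mathbf{f}\circ\cdots\circ T_{\xi_1}\mathbf{f}\circ T_{\bg}\mathbf{f}$: by Lemma \ref{ramification} each intermediate tangent map $T_{\xi_j}\mathbf{f}$ with $j=1,\ldots,q-1$ has degree $1$ and so contributes no critical points, while at $\bg\in\mathcal{R}_{\mathbf{f}}$ the two ends of the ramification segment $[0,\infty]$ carry the full critical multiplicity of $T_{\bg}\mathbf{f}$.

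Second, I would record the fixed-point data for $\hat z$. Because $\hat z$ is a multiple fixed point of $G$, we have $G(\hat z)=\hat z$ and $G'(\hat z)=1\neq 0$, so $\hat z$ is not a critical point of $G$; in particular $\hat z\notin\{0,\infty\}$, and $\hat z$ is a preimage of itself with local degree $1$.

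The remainder is a one-line count. Suppose for contradiction that $0\in G^{-1}(\hat z)$; the case $\infty\in G^{-1}(\hat z)$ is symmetric. Then $G^{-1}(\hat z)$ contains the two distinct points $\hat z$ and $0$, with local degrees $1$ and $d$ respectively, giving $\sum_{w\in G^{-1}(\hat z)}\deg_w G\ge 1+d=d+1$. But this sum equals $\deg G=d$, a contradiction. I do not expect a real obstacle here; the only step that demands any care is the identification of the critical points of $G$, which is clinched either by the explicit form of $\mathcal{F}$ or by the chain-rule argument above.
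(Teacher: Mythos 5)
Your proposal is correct and is essentially the paper's own argument: both establish that $\hat z$ is a fixed point with $G'(\hat z)=1$ (hence non-critical, a preimage of itself of local degree $1$), and then derive the contradiction $\sum_{w\in G^{-1}(\hat z)}\deg_w G\ge d+1>d=\deg G$ from the local degree $d$ of $G$ at $0$ or $\infty$. The only (immaterial) difference is that you justify the local degree $d$ at the critical points via the explicit form of $G\in\mathcal{F}$ or the chain rule, whereas the paper reads it off from the Berkovich direction data in Proposition \ref{repelling}.
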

\begin{proof}
Note $\hat z\in G^{-1}({\hat z})$ since $\hat z$ is a fixed point of $G$. By Proposition \ref{repelling}, $G'(\hat z)=1$, so $\hat z\not\in\{0,\infty\}$ since $0,\infty$ are critical points.  Suppose to the contrary that $G(0)=\hat{z}$. Then by Proposition \ref{repelling}, the directions $\vec{v}_{\bg}(0)$ and $\vec{v}_{\bg}(\hat z)$ both map to $\vec{v}_{\xi_1}(\bg)$ with degree $d$ and $1$, respectively.  Thus under $G$ we have $0\mapsto \hat{z}$ by degree $d$ and $\hat{z}\mapsto \hat{z}$ by degree $1$. This is impossible since $\deg(G)=d$. The case $G(\infty)=\hat{z}$ is ruled out similarly.
\end{proof}

\subsection{Limits of cycles with bounded multipliers} We continue the setup and notation of the previous subsection.

By assumption, the multiplier $\lambda(t_k)$ of $f_{t_k}$ associated to the cycle $\langle z({t_k})\rangle$ remains bounded as $t_k \to 0$. The proof of Lemma \ref{repelling-existence} shows that $\lambda(t)$ in fact converges, say to $\lambda(0)$, as does the cycle itself, say $\langle z(t)\rangle \to \Gamma \subset \mathbb{P}^1$.  Elementary arguments show easily that $G(\Gamma) \subset \Gamma$. In general, equality need not hold.  The next lemma gives the possibilities in our setting.

\begin{lemma}
\label{limit-infinity}
Let $G, \hat z, \Lambda, \Gamma$ be as above. Then the limit $\Gamma$ of the cycle $\langle z(t)\rangle$ either
\begin{itemize}
\item contains a preperiodic critical point that iterates under $G$ to $\hat{z}$
\item contains a cycle disjoint from $\hat{z}$, or
\item collapses to the singleton $\{\hat{z}\}$.
\end{itemize}
More precisely, $\hat z\in\Gamma$ and exactly one of the following holds.
\begin{enumerate}
\item $\Gamma\cap \Lambda\not=\emptyset$ Then $J(\mathbf{f})$ contains two distinct type II repelling cycles, and the set $\{0,\infty\}\cap\Gamma$ consists of a single point $c'$.  The point $c'$ is a critical point of $G$ and $G^\ell(c')=\hat{z}$ for some $1<\ell< n$.
\item $\Gamma\cap \Lambda=\emptyset$ and $\Gamma\not=\{\hat z\}$. Then $\Gamma\setminus\{\hat z\}$ is a cycle of $G$ and $q\mid n$.
\item $\Gamma=\{\hat z\}$.
\end{enumerate}
\end{lemma}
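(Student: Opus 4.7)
The plan is to first establish $\hat z\in\Gamma$ and then unpack each of the three cases. The base observation is the given $G(\Gamma)\subset\Gamma$: since every point of $G^{-1}(\hat z)=\{\hat z\}\cup\Lambda$ is sent by the rational map $G$ to $\hat z$, whenever $\Gamma$ meets $G^{-1}(\hat z)$ we automatically obtain $\hat z\in\Gamma$. This disposes of case (1) (pick any $z_i(0)\in\Gamma\cap\Lambda$; then $\hat z=G(z_i(0))\in\Gamma$) and case (3) (by definition). For case (2), with $\Gamma\cap\Lambda=\emptyset$ and $\Gamma\neq\{\hat z\}$, I appeal to the Fatou combinatorics of Proposition \ref{repelling}: the cycle $\langle\mathbf{z}\rangle$ lies either in the fixed Rivera domain $U$ or in a cycle of $n$ open Berkovich balls $V_0,\dots,V_{n-1}$ disjoint from $U$, each contained in some closed ball $B_{j(i)}\subset\mathbf{P}^1\setminus U$. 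The first possibility is incompatible with $\Gamma\neq\{\hat z\}$, so we are in the second. Since $z_i(0)=\hat z$ precisely when $j(i)\neq 0$, the task reduces to exhibiting an index $i$ with $j(i)\neq 0$.

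Suppose to the contrary that every $j(i)=0$. Combined with $\Gamma\cap\Lambda=\emptyset$ and the hypothetical $\hat z\notin\Gamma$, every $z_i(0)$ lies in a good direction of $\mathbf{f}^q$ at $\bg$, so $G$ permutes $\Gamma$ via $z_i(0)\mapsto z_{(i+q)\bmod n}(0)$. This produces $\gcd(n,q)$ attracting or indifferent $G$-cycles, each of common multiplier $\lambda(0)^{q/\gcd(n,q)}$ and each distinct from the parabolic fixed point $\hat z$ of $G$. Together with $\hat z$ this yields $\gcd(n,q)+1$ non-repelling cycles of the bicritical rational map $G$. Fatou--Shishikura forces $\gcd(n,q)=1$, and the remaining configuration is ruled out by the refined inequality (Theorem \ref{refined-FSI}) applied to $G$ in tandem with Proposition \ref{repelling}(4), which caps $\mathbf{f}$ at two type II repelling cycles, one of them being $\partial U$. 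This contradiction establishes some $j(i)\neq 0$ and hence $\hat z\in\Gamma$.

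With $\hat z\in\Gamma$ in hand, the case analysis proceeds cleanly. Case (3) is immediate. In case (2), the set $\Gamma\setminus\{\hat z\}$ lies in $\mathbb{P}^1\setminus G^{-1}(\hat z)$ and is $G$-invariant; the same cycle-count bound from the previous paragraph bars it from splitting into more than one $G$-orbit, so $\Gamma\setminus\{\hat z\}$ is a single $G$-cycle. The divisibility $q\mid n$ follows by a combinatorial count on the sequence $j(i)$: runs of indices with $j(i)\neq 0$ have length exactly $q-1$ (using $j(i+1)=j(i)+1\bmod q$ when $j(i)\neq 0$), and matching this against the orbit size $n/\gcd(n,q)$ on the $B_0$-side forces $\gcd(n,q)=q$. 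In case (1), the hole $z_i(0)\in\Lambda$ means the ball in direction $z_i(0)$ at $\bg$ is mapped by $\mathbf{f}^q$ onto all of $\mathbf{P}^1$; this produces extra periodic structure which, together with Proposition \ref{repelling}(4), forces a second type II repelling cycle in $J(\mathbf{f})$. Combining Lemma \ref{preimage-simple} (which places the critical points in $B_0$ and gives $G(\{0,\infty\})\not\ni\hat z$) with the hole structure pulls a unique critical point $c'\in\{0,\infty\}\cap\Gamma$ of $G$ into $\Gamma$, and the orbit condition $G^\ell(c')=\hat z$ with $1<\ell<n$ follows from Lemma \ref{preimage-simple} (giving $\ell\geq 2$) together with the finiteness of $\Gamma$ (giving $\ell<n$).

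The main obstacle is the exclusion of the ``all $V_i\subset B_0$'' configuration in the proof of $\hat z\in\Gamma$ for case (2). The plain Fatou--Shishikura inequality on $G$ yields only $\gcd(n,q)\leq 1$; completing the contradiction requires the refined inequality with its parabolic-multiplicity contribution at $\hat z$, combined with the repelling type II cycle bound from Proposition \ref{repelling}(4). The interplay between Fatou--Shishikura accounting for the complex rational map $G$ and the Berkovich cycle structure for $\mathbf{f}$ is the technical heart of the argument.
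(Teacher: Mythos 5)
Your overall skeleton (first prove $\hat z\in\Gamma$, then sort the three cases by itinerary through the balls $B_0,\dots,B_{q-1}$) matches the paper's, and your rule $j(i+1)=j(i)+1 \bmod q$ for $j(i)\neq 0$ is exactly the right combinatorial mechanism. But the two places where you invoke Fatou--Shishikura are where the argument breaks. First, your exclusion of the ``all $j(i)=0$'' configuration does not close: the $\gcd(n,q)$ index-orbits need not produce distinct cycles of $G$ (the points $z_i(0)$ may coincide), so FSI does not force $\gcd(n,q)=1$; and even the residual configuration --- one nonrepelling cycle $C$ disjoint from the parabolic point $\hat z$ --- is \emph{consistent} with the refined FSI when $C$ is superattracting (then $\gamma_C=0$, $\delta(G)=1$, $\gamma(G)=\gamma_{\hat z}$ can equal $1$), so no contradiction results. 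The correct and much softer mechanism is the one you already half-state but only apply when $j(i)\neq 0$: the directions at $\bg$ mapping under $T_{\bg}\mathbf{f}$ to $\vec{v}_{\xi_1}(U)$ are exactly those corresponding to $G^{-1}(\hat z)$, so if $\mathbf{z}_i\in B_0$ lies in a direction $\vec{v}_{\bg}(z)$ with $z\notin G^{-1}(\hat z)$, then already $\mathbf{z}_{i+1}\in B_1\subset\vec{v}_{\bg}(\hat z)$. This kills the ``all $j(i)=0$'' configuration in one line, gives $\hat z\in\Gamma$ directly (no contradiction needed), yields $j(i)\equiv i\pmod q$ and hence $q\mid n$, and shows $\Gamma\setminus\{\hat z\}=\{z_{kq}(0)\}$ is a single cycle without any appeal to FSI. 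This is what the paper does.

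Second, your treatment of case (1) is largely assertion. Deducing $\hat z\in\Gamma$ from ``$G(\Gamma)\subset\Gamma$'' applied at a point of $\Gamma\cap\Lambda$ is circular in spirit: at a hole the convergence $f_t^q\to G$ is not locally uniform, so the limit of $z_{i+q}(t)$ need not be $G(z_i(0))$; the elementary inclusion $G(\Gamma)\subset\Gamma$ is only automatic off the holes, and establishing it at holes is precisely the content being proved. (The paper instead first shows $\Gamma\setminus\Lambda\neq\emptyset$ --- if $\Gamma\subset\Lambda$ the induced second repelling cycle $\mathcal{O}'$ would miss the ramification locus by Lemma \ref{preimage-simple} and so be indifferent --- and then argues from a point of $\Gamma\setminus\Lambda$.) Likewise, ``$G^\ell(c')=\hat z$ follows from finiteness of $\Gamma$'' is not an argument: finiteness only says the orbit of $c'$ inside $\Gamma$ is eventually periodic, not that it reaches $\hat z$. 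The paper proves this by noting $\vec{v}_{\bg}(c')$ meets $J(\mathbf{f})$ (it contains a point of $\mathcal{O}'$), so its iterates cannot remain good directions avoiding $U$ forever, forcing $T_{\bg}\mathbf{f}^{kq+1}(\vec{v}_{\bg}(c'))=\vec{v}_{\xi_1}(U)$ for some $k$, i.e.\ $G^{k+1}(c')=\hat z$; the uniqueness of $c'$ in $\{0,\infty\}\cap\Gamma$ uses that the other critical point of $G$ is absorbed by the parabolic basin of $\hat z$, so its direction lies in $F(\mathbf{f})$ and cannot meet the cycle. These steps need to be supplied, not cited to Lemma \ref{preimage-simple}, which only says $G^{-1}(\hat z)\cap\{0,\infty\}=\emptyset$.
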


\begin{proof}

Recall that by Proposition \ref{repelling} (2), the bad directions of $\mathbf{f}^q$ at the Gauss point $\bg$ are the directions corresponding to points in $G^{-1}(\hat z)$.

We begin with a preliminary result. Suppose $\Gamma\cap\Lambda\not=\emptyset$.  We show that under this assumption, $\mathbf{f}$ has a second repelling cycle $\mathcal{O}'$ distinct from that containing $\mathbf{g}$.  Let $w\in\Gamma\cap\Lambda$. The direction $\vec{v}_{\bg}(w)$ is bad for $\mathbf{f}^q$ and so $\bg$ has preimages under $\mathbf{f}^q$ in the direction $\vec{v}_{\bg}(w)$. Hence the Julia set $J(\mathbf{f})$ meets the direction $\vec{v}_{\bg}(w)$ since $\bg\in J(\mathbf{f})$. The cycle $\langle\mathbf{z}\rangle\subset\mathbb{P}^1_{\mathbb{L}}$ is nonrepelling. Hence $\langle\mathbf{z}\rangle$ is in the classical Fatou set of $\mathbf{f}$ (\cite[Proposition 4.24]{Benedetto10}) and hence in the Berkovich Fatou set of $\mathbf{f}$ \cite[Theorem 10.67]{Baker10}. Thus the map $\mathbf{f}$ has periodic Fatou components which are not fixed.  By Proposition \ref{repelling} (1), these periodic components are open balls.
It follows that the corresponding boundary points of these Fatou components form a periodic cycle $\mathcal{O}'\subset J(\mathbf{f})$ for which $\mathbf{g}\not\in\mathcal{O}'$. Therefore,  $J(\mathbf{f})\cap\mathbb{H}$ contains at least two cycles. Then by Proposition \ref{Julia-cycle}, $\mathcal{O'}$ is repelling cycle distinct from that containing $\mathbf{g}$.

Now we claim that $\Gamma\setminus\Lambda\not=\emptyset$. For otherwise, $\Gamma\cap\Lambda=\Gamma\not=\emptyset$, and the hypothesis of the setup of the previous paragraph is satisfied. By the conclusion of the previous paragraph, we have a second repelling cycle $\mathcal{O}'$. Recall that the ramification locus is $\mathcal{R}_{\mathbf{f}}=[0, \infty]$.  We have
$$\mathcal{O}'\subset\bigcup_{w\in G^{-1}(\hat{z})}\vec{v}_{\bg}(w).$$
By Lemma \ref{preimage-simple},  it follows that $\mathcal{O}'$ is disjoint with the ramification locus $\mathcal{R}_{\mathbf{f}}$. Thus $\mathcal{O}'$ is indifferent, which contradicts the fact that $\mathcal{O}'$ is repelling.

The preceding paragraph implies there exists $z\in\Gamma\setminus\Lambda$. Then either $z=\hat z$, and hence $\hat z\in\Gamma$, or the direction $\vec{v}_{\bg}(z)$ is a good direction. In the latter case, since $z\in\Gamma$, there exists $\mathbf{z}_i\in\langle\mathbf{z}\rangle$ such that $\mathbf{z}_i\in\vec{v}_{\bg}(z)$. Note $\mathbf{f}(\bg)$ is contained in the direction $\vec{v}_{\bg}(\hat z)$, see Proposition \ref{repelling}. It follows that $\mathbf{z}_{i+1}\in\vec{v}_{\bg}(\hat z)$ and so $\hat z\in\Gamma$ in this case too.

Finally, we prove  the second part of conclusion (1), and conclusion (2).

Suppose $\Gamma\cap\Lambda\not=\emptyset$. Since the cycle $\mathcal{O}'$ is repelling, it contains an element, say $\xi'$, that lies in the ramification locus. Proposition \ref{repelling} (4) implies there is a critical point $c'\in\{0,\infty\}$ such that the direction $\vec{v}_{\xi'}(c')$ is a good direction of $\mathbf{f}^n$. Note there exists $\mathbf{z}_i\in\langle\mathbf{z}\rangle$ such that $\xi'\in(\mathbf{z}_i, \bg)$.  It follows that $\mathbf{z}_i$ is in the direction $\vec{v}_{\bg}(c')$. Hence $c'\in\Gamma$. Since $\hat z$ is a multiple fixed point, there exists a critical point $c$ of $G$ which is attracted to $\hat z$. Then the direction $\vec{v}_{\bg}(c)$ is contained in the Fatou set $F(\mathbf{f})$ and $\langle\mathbf{z}\rangle\cap\vec{v}_{\bg}(c)=\emptyset$. Hence $c\not\in\Gamma$. Since $G \in \mathcal{F}$,  the set of critical points of $G$ is $\{0,\infty\}$, so $\{0,\infty\}\cap\Gamma=\{c'\}$.

Now we claim that $c'$ iterates under $G$ to $\hat z$. To prove the claim, we show there exists $k\ge 1$ such that $\vec{v}_{\bg}(c')$ maps to $\vec{v}_{\xi_1}(U)$ under the tangent map $T_{\bg}\mathbf{f}^{kq+1}$. First note $\vec{v}_{\bg}(c')$ intersects with the Julia set $J(\mathbf{f})$ since $\vec{v}_{\bg}(c')\cap\mathcal{O}'\not=\emptyset$. Suppose there exists no such $k$. Then by Proposition \ref{repelling} (2), for any $k\ge 0$, the direction $T_{\bg}\mathbf{f}^{kq+1}(\vec{v}_{\bg}(c'))$ is a good direction at $\xi_1$. Again by Proposition \ref{repelling} (1) and (2), it follows that $T_{\bg}\mathbf{f}^{m}(\vec{v}_{\bg}(c'))$ is a good direction at $\mathbf{f}^m(\bg)$ for all $m\ge 0$. Therefore,  $U\bigcap\bigcup_{m\ge 0}\mathbf{f}^m(\vec{v}_{\bg}(c'))=\emptyset$, and hence $\vec{v}_{\bg}(c')$ is contained in $F(\mathbf{f})$, which is a contradiction. Noting that $T_{\bg}\mathbf{f}^{(k+1)q}(\vec{v}_{\bg}(c'))=\vec{v}_{\bg}(U)$, we conclude $G^{k+1}(c')=\hat z$. Now choose the smallest such $k$ and set $\ell=k+1$. Then $G^{\ell}(c')=\hat z$. Note $qk<n$ since $\mathbf{f}^{qk}(\mathbf{z}_i)$ is not in the direction $\vec{v}_{\bg}(c')$. We have $1<\ell=k+1\le qk<n$ since $q\ge 2$.

If $\Gamma\cap\Lambda=\emptyset$ and $\Gamma\not=\{\hat z\}$, we may assume that $\mathbf{z}_0\in\vec{v}_{\bg}(z_0)$ for some $z_0\in\mathbb{C}\setminus\{0,\hat z\}$, and assume $\mathbf{z}_1\in\vec{v}_{\bg}(\hat z)$. For $k \in \mathbb{N}$ let  $z_{kq}\in\Gamma$ be such that $\mathbf{z}_{kq}\in\vec{v}_{\bg}(z_{kq})$.  Note the directions $\vec{v}_{\bg}(z_{kq})$ are away from the bad directions of $\mathbf{f}^q$ at $\bg$. Then $G(z_{kq})=z_{(k+1)q}$, see \cite[Lemma 3.2]{Kiwi15}, and $q\mid n$. It follows that the $z_{kq}$s comprise a cycle of $G$. The conclusion that $\{z_{kq}\}=\Gamma\setminus\{\hat z\}$ then follows by Proposition \ref{repelling}(1).
\end{proof}

Now we do a more elaborate analysis for the case (1) in the above result. Mainly, we focus on the repelling cycle $\mathcal{O}'\not=\partial U$.
\begin{lemma}\label{cycle-period}
Fix the notations as before. Suppose $\Gamma\cap \Lambda\not=\emptyset$ and let $q'$ be the period of $\mathcal{O}'$. Then $q'\mid n$ and $\mathcal{O}'$ consists of the boundaries of the Fatou components containing $\langle\mathbf{z}\rangle$. In particular, let $\xi'\in\mathcal{O}'\cap[0,\infty]$ and let $\mathbf{z}_i\in\langle\mathbf{z}\rangle$ be in the direction $\vec{v}_{\bg}(c')$; then $\mathbf{f}^{kq'}(\mathbf{z}_i)$ is not contained in the direction $\vec{v}_{\bg}(c)$, where $c\in\{0,\infty\}$ and $c\not=c'$.
\end{lemma}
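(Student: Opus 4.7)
\textbf{Proof proposal for Lemma \ref{cycle-period}.}

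The plan is to unpack the identification of $\mathcal{O}'$ already implicit in the proof of Lemma \ref{limit-infinity} and then argue directly from the structure of Fatou components.  First I would recall that, since the multiplier of $\langle z(t)\rangle$ is bounded, the cycle $\langle\mathbf{z}\rangle\subset\mathbb{P}^1_{\mathbb{L}}$ is nonrepelling and hence lies in the Berkovich Fatou set of $\mathbf{f}$.  Let $V_i$ be the Fatou component of $\mathbf{f}$ containing $\mathbf{z}_i$.  By Proposition \ref{repelling}(1), the only fixed Fatou component is the Rivera domain $U$, and every periodic Fatou component of period greater than one is an open Berkovich ball.  In the proof of Lemma \ref{limit-infinity} it was already shown that $V_i$ is not the fixed Rivera domain (otherwise $\bg\in\Gamma$ would give $\Gamma\cap\Lambda=\emptyset$), so $V_i$ is an open ball with a single type II boundary point $\eta_i$, and the $\eta_i$ form a periodic cycle in $J(\mathbf{f})\cap\mathbb{H}$ distinct from $\partial U$.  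Since by Proposition \ref{repelling}(4) the map $\mathbf{f}$ has at most two repelling cycles in $\mathbb{H}$, and by Proposition \ref{Julia-cycle} case (3) (which we are in) precludes any further indifferent cycles in $J(\mathbf{f})\cap\mathbb{H}$, this boundary cycle must coincide with $\mathcal{O}'$.

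With this identification, the divisibility $q'\mid n$ is immediate: the period of the Fatou component $V_i$ equals the period $q'$ of its boundary point in $\mathcal{O}'$, and this period must divide the period $n$ of the point $\mathbf{z}_i$ it contains.  This also yields the first clause of the lemma, namely that $\mathcal{O}'$ consists precisely of the boundaries of the Fatou components containing the points of $\langle\mathbf{z}\rangle$.

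For the final assertion, fix $\xi'\in\mathcal{O}'\cap[0,\infty]$ and let $\mathbf{z}_i$ be in the direction $\vec{v}_{\bg}(c')$ (so $\xi'$ and $\mathbf{z}_i$ lie in a common direction at $\bg$, as established in the proof of Lemma \ref{limit-infinity}(1)).  Since $\bg\in\partial U\subset J(\mathbf{f})$, we have $\bg\notin V_i$, so the open ball $V_i$ is entirely contained in a single direction at $\bg$.  That direction must be $\vec{v}_{\bg}(c')$ because it contains $\mathbf{z}_i$.  Since $\mathbf{f}^{q'}(V_i)=V_i$, we obtain $\mathbf{f}^{kq'}(\mathbf{z}_i)\in V_i\subset \vec{v}_{\bg}(c')$ for every $k\ge 0$, so $\mathbf{f}^{kq'}(\mathbf{z}_i)\notin \vec{v}_{\bg}(c)$ for the other critical point $c\ne c'$.

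The main obstacle I anticipate is the bookkeeping step of ruling out the possibility that the boundary cycle of the $V_i$ is some further indifferent cycle distinct from both $\partial U$ and $\mathcal{O}'$: this needs Proposition \ref{Julia-cycle} together with Proposition \ref{repelling}(4), and a careful check that we genuinely are in case (3) of Proposition \ref{Julia-cycle} so that no indifferent cycles in $J(\mathbf{f})\cap\mathbb{H}$ coexist with the repelling one.  Once that identification is in hand, the rest of the argument is a direct consequence of the invariance of $V_i$ under $\mathbf{f}^{q'}$ together with the fact that $V_i$ avoids $\bg$.
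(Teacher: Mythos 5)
Your overall strategy --- realize $\mathcal{O}'$ as the set of boundary points of the periodic open-ball Fatou components containing $\langle\mathbf{z}\rangle$ and then read off the conclusions --- is the same as the paper's, but two steps are asserted rather than proved, and the first in particular is a genuine gap. You conclude that the cycle of boundary points $\{\eta_i\}$ is distinct from $\partial U$ merely from the fact that $V_i$ is not the Rivera domain $U$. That does not follow: a periodic Fatou component which is an open ball could a priori have its single boundary point on $\partial U$ (for instance, be a whole direction at $\bg$ itself). The paper rules this out by a separate argument: $V_i\subset\vec{v}_{\bg}(c')$, so if $\partial V_i$ lay on $\partial U$ it would have to equal $\{\bg\}$ and then $V_i=\vec{v}_{\bg}(c')$, which is impossible because $\xi'\in\mathcal{O}'\cap(\bg,c')$ lies in $\vec{v}_{\bg}(c')$ and belongs to $J(\mathbf{f})$. (Locating $\xi'$ on the $c'$-side of $\bg$ in turn uses that the other critical point $c$ is attracted to the parabolic point $\hat z$ of $G$, so $(\bg,c)$ contains no periodic point.) Without this step the identification of the boundary cycle with $\mathcal{O}'$, and hence everything downstream, is unsupported.

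Second, your final step rests on $\mathbf{f}^{q'}(V_i)=V_i$, i.e.\ that the period of the Fatou component equals the period $q'$ of its boundary point. A priori the component's period is only a multiple of $q'$: the tangent map $T_{\eta_i}\mathbf{f}^{q'}$ fixes $\eta_i$ but may permute the Fatou directions at $\eta_i$. (This is harmless for $q'\mid n$, since the period of the boundary point divides the period of the component, which divides $n$.) For the last clause the paper argues differently: $T_{\xi'}\mathbf{f}^{q'}$ is conjugate to a unicritical \emph{polynomial}, so $\vec{v}_{\xi'}(c)=\vec{v}_{\xi'}(\bg)$ is the unique bad direction at $\xi'$ and is totally invariant, whence the forward $\mathbf{f}^{q'}$-orbit of the good direction at $\xi'$ containing $\mathbf{z}_i$ never reaches $\vec{v}_{\xi'}(\bg)$ and stays inside $\vec{v}_{\bg}(c')$. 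Your argument can be repaired without establishing $\mathbf{f}^{q'}(V_i)=V_i$ --- each $\mathbf{f}^{kq'}(V_i)$ is a Fatou component with boundary $\{\eta_i\}$, hence a direction at $\eta_i$ avoiding $\bg$ and therefore contained in $\vec{v}_{\bg}(c')$ --- but as written the invariance of $V_i$ under $\mathbf{f}^{q'}$ is not justified.
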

\begin{proof}
By Lemma \ref{limit-infinity} (1), renumbering the points in $\langle\mathbf{z}\rangle$, we assume $\mathbf{z}_0$ and $c'$ are in the same direction at $\bg$. Note $\langle\mathbf{z}\rangle$ is a nonrepelling cycles of $\mathbf{f}$ and hence is contained in the Fatou set $F(\mathbf{f})$. By Proposition \ref{repelling} (1), the Fatou component $\Omega(\mathbf{z}_0)$ containing $\mathbf{z}_0$ is an open ball. Then the boundary $\partial\Omega(\mathbf{z}_0)$ is a periodic point in the Julia set $J(\mathbf{f})$. By Proposition \ref{Julia-cycle} and Lemma \ref{repelling-existence}, $\partial\Omega(\mathbf{z}_0)$ is repelling. Now we claim $\partial\Omega(\mathbf{z}_0)\in \mathcal{O'}$. Since $G$ has a parabolic fixed point $\hat z$, a critical point is attracted to $\hat z$. So this critical point is $c\not=c'$ and there is no periodic point in $(\bg,c)$. Thus by Proposition \ref{repelling} (4),  $\mathcal{O'}\cap(\bg,c')\not=\emptyset$ contains exactly one point $\xi'$. Again by Proposition \ref{repelling} (4), $\mathbf{f}$ has exactly two repelling cycles $\mathcal{O}$ and $\mathcal{O'}$ in $\mathbb{H}$. If $\partial\Omega(\mathbf{z}_0)\notin \mathcal{O'}$, then $\partial\Omega(\mathbf{z}_0)\in \mathcal{O}$. Lemma \ref{limit-infinity} (1) implies that $\Omega(\mathbf{z}_0)$ are contained in the direction $\vec{v}_\bg(c')$. Since $\Omega(\mathbf{z}_0)$ is an open ball, we have $\Omega(\mathbf{z}_0)=\vec{v}_\bg(c')$, which is impossible because $\xi'$ is contained in $\vec{v}_\bg(c')$.

Now by Proposition \ref{repelling}(4), the map $T_{\xi'}\mathbf{f}^{q'}$ is conjugate to a polynomial of degree $d$. Then by Lemma \ref{ramification} (3), for any $m\ge 2$, the bad directions for $\mathbf{f}^m$ at $\xi'$ is $\vec{v}_{\xi'}(c)$. Therefore, $\mathbf{z}_0$ is contained in a good direction of $\mathbf{f}^n$ at $\xi'$. Then the conclusion follows.
\end{proof}

\begin{corollary}\label{Rivera-boundary-period}
Under the assumption of Lemma \ref{repelling-existence}, we have $1<q\le n$.
\end{corollary}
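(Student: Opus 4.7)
The plan is to use the trichotomy of Lemma \ref{limit-infinity} to reduce to three cases. The bound $1<q$ is immediate from Proposition \ref{repelling}(1). In Case (1), Proposition \ref{repelling}(4) provides a second repelling cycle $\mathcal{O}'$ of period $q'>q$, and Lemma \ref{cycle-period} yields $q'\mid n$; thus $q<q'\leq n$. Case (2) is immediate: Lemma \ref{limit-infinity}(2) gives $q\mid n$, hence $q\leq n$.

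The main technical work is Case (3), where $\Gamma=\{\hat z\}$, so each $\mathbf{z}_i\in\vec{v}_{\bg}(U)$ and in particular $\mathbf{z}_i\notin B_0$. Here I would exploit the rotational action of $T_{\bc}\mathbf{f}$ at the center $\bc$ of $U$, which by Proposition \ref{repelling}(1) is a rotation of order $q$ on $T_{\bc}\mathbf{P}^1\cong\mathbb{P}^1$ cyclically permuting the $q$ branch directions $\vec{v}_{\bc}(\xi_0),\ldots,\vec{v}_{\bc}(\xi_{q-1})$. I would split on whether $\vec{v}_{\bc}(\mathbf{z}_0)$ is one of the two fixed points of this rotation. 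If not, the hypothesis $\mathbf{z}_i\notin B_0$ ensures $\vec{v}_{\bc}(\mathbf{z}_i)\neq\vec{v}_{\bc}(\bg)=\vec{v}_{\bc}(\xi_0)$, so the orbit avoids the unique bad direction at $\bc$ (bad because it contains the ramification locus $[0,\infty]$); hence the tangent map computes the iterates, $\vec{v}_{\bc}(\mathbf{z}_i)=(T_{\bc}\mathbf{f})^i(\vec{v}_{\bc}(\mathbf{z}_0))$. A non-fixed point of an order-$q$ rotation has orbit of length exactly $q$, and since $\mathbf{f}^n$ fixes $\mathbf{z}_0$, I conclude $q\mid n$.

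If instead $\vec{v}_{\bc}(\mathbf{z}_0)$ is a fixed direction, then since $q\geq 2$ implies none of the $\vec{v}_{\bc}(\xi_j)$ is fixed, $\vec{v}_{\bc}(\mathbf{z}_0)$ differs from every boundary direction; the component of $\mathbf{P}^1\setminus\{\bc\}$ containing $\mathbf{z}_0$ therefore meets no $\xi_j$ and lies entirely in $U$, and invariance of this fixed direction under $\mathbf{f}$ forces $\langle\mathbf{z}\rangle\subset U$. Assuming $q>n$ for contradiction gives $q\nmid n$, so no $\xi_j$ is fixed by $\mathbf{f}^n$, and Theorem \ref{fp-number} applied to the fixed Rivera domain $U$ for $\mathbf{f}^n$ yields $N(\mathbf{f}^n,U)=2$. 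But the $n$ distinct cycle points, not being fixed by $\mathbf{f}$ since $n\geq 2$, are distinct from the two fixed points of $\mathbf{f}$ in $U\cap\mathbb{P}^1_{\mathbb{L}}$ supplied by Theorem \ref{fp-number} applied to $\mathbf{f}$; together they yield at least $n+2>2$ fixed points of $\mathbf{f}^n$ in $U\cap\mathbb{P}^1_{\mathbb{L}}$ counted with multiplicity, a contradiction. The main obstacle I expect is the careful treatment of the bad direction $\vec{v}_{\bc}(\bg)$ at $\bc$ in the non-fixed subcase: the tangent map only controls images of good directions, so the Case (3) hypothesis $\mathbf{z}_i\notin B_0$ is precisely what guarantees the orbit avoids this bad direction and the argument goes through.
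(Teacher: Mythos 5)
Your overall route is the paper's: the trichotomy of Lemma \ref{limit-infinity}, with $q<q'\le n$ via Proposition \ref{repelling}(4) and Lemma \ref{cycle-period} in case (1), $q\mid n$ from Lemma \ref{limit-infinity}(2) in case (2), and an argument reducing the collapsing case to $\langle\mathbf{z}\rangle\subset U$ plus a fixed-point count via Theorem \ref{fp-number}. Cases (1) and (2) and your subcase (b) are correct; subcase (b) in fact supplies exactly the detail behind the paper's terse ``$\langle\mathbf{z}\rangle\subset U$, and hence $q\mid n$.''

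The gap is in subcase (a) of your Case (3). The inference ``$\mathbf{z}_i\notin B_0$ ensures $\vec{v}_{\bc}(\mathbf{z}_i)\neq\vec{v}_{\bc}(\bg)$'' is false: the direction $\vec{v}_{\bc}(\bg)$ is strictly larger than $B_0$, since it also contains the open segment $(\bc,\bg)$ and every direction branching off that segment; these lie in $U$ and contain type I points, so a cycle point can lie in $\vec{v}_{\bc}(\bg)\setminus B_0$ without violating $\Gamma=\{\hat z\}$. Worse, in the situation subcase (a) is implicitly meant to cover---some $\mathbf{z}_0\in B_j$ with $j\neq 0$, so that $\vec{v}_{\bc}(\mathbf{z}_0)=\vec{v}_{\bc}(\xi_j)$ is not fixed by the rotation---the rotation orbit of that direction is precisely $\{\vec{v}_{\bc}(\xi_0),\dots,\vec{v}_{\bc}(\xi_{q-1})\}$ and so reaches the bad direction $\vec{v}_{\bc}(\xi_0)$ after $q-j$ steps; your assertion that the orbit avoids the bad direction is then simply wrong, and at that step the tangent map no longer determines $\vec{v}_{\bc}(\mathbf{z}_{i+1})$. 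The correct dichotomy is whether $\langle\mathbf{z}\rangle\subset U$. If some $\mathbf{z}_i\in U$ then all are (since $\mathbf{f}(U)=U$) and your subcase (b) count finishes, with no need for the rotation argument. If no $\mathbf{z}_i$ lies in $U$, then each $\mathbf{z}_i\in B_{j_i}$ with $j_i\neq 0$ (by $\Gamma=\{\hat z\}$), the good-direction computation at $\bc$ gives $j_{i+1}=j_i+1 \bmod q$, and after finitely many steps some $\mathbf{z}_i$ is forced into $\vec{v}_{\bc}(\xi_0)\setminus U=B_0\setminus\{\bg\}$, contradicting $\Gamma=\{\hat z\}$. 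This is the content of the paper's appeal to Proposition \ref{repelling}(1) to conclude $\langle\mathbf{z}\rangle\subset U$.
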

\begin{proof}
Proposition \ref{repelling} implies  $q>1$. To show $q\le n$, we apply Lemmas \ref{limit-infinity} and \ref{cycle-period}. If $\Gamma=\{\hat z\}$, by Proposition \ref{repelling} (1), $\langle\mathbf{z}\rangle\subset U$, and hence  $q\mid n$. If $\Gamma\cap\Lambda\not=\emptyset$, then by Lemma \ref{cycle-period}, it follows that $q<n$ since $q<q'\leq n$ by Proposition \ref{repelling} (4). If $\Gamma\cap\Lambda=\emptyset$ and $\Gamma\not=\{\hat z\}$, it holds by Lemma \ref{limit-infinity} (2).
\end{proof}

Since we will focus on the case that $\langle z(t)\rangle$ is an attracting cycle in next following subsections, we sharpen the conclusion of case (2) in Lemma \ref{limit-infinity} as follows.  We must be careful when examining derivatives of degenerating families.   Written in terms of ratios of homogeneous polynomials, if $f_t \to HG \in \mathbb{P}^{2d+1}$ where $G$ is in lowest terms, then $f_t' \to H^2G'$ in $\mathbb{P}^{4d+1}$; however, $G'$ need not be in lowest terms. The holes of $H^2G'$ are the holes of $HG$ together with a subset of the zeros of the denominator of $G$, i.e. the poles of $G$.

\begin{lemma}\label{nonrepelling-cycle}
Fix the notations as before. Assume $\langle z(t)\rangle$ is an attracting cycle. If $\Gamma\cap \Lambda=\emptyset$ and $\Gamma\not=\{\hat z\}$. Then $\Gamma\setminus\{\hat z\}$ is a nonrepelling cycle of $G$.
\end{lemma}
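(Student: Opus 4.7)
The strategy is to show that the multiplier $\mu$ of the $G$-cycle $\Gamma\setminus\{\hat z\}$ satisfies $|\mu|\le 1$, by identifying it (up to a power) with the limit of the multiplier $\lambda(t)$ of the attracting $n$-cycle $\langle z(t)\rangle$. Since $\lambda(t)$ is algebraic in $t$ and $|\lambda(t)|<1$ on the attracting locus, it is bounded near $t=0$ and so extends continuously with $\lambda_0:=\lim_{t\to 0}\lambda(t)\in\overline{\mathbb{D}}$.

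By Lemma \ref{limit-infinity}(2), $q\mid n$, so write $n=qm$. The $G$-cycle $\{z_0,z_q,\ldots,z_{(m-1)q}\}$ has some period $m'$ dividing $m$, and its multiplier is $\mu=(G^{m'})'(z_0)$. I would then split the argument into two cases. In the easy case, some $z_{kq}$ equals a critical point of $G$ (one of $0,\infty$); then the cycle is superattracting, $\mu=0$, and the cycle is trivially nonrepelling.

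In the main case, every $z_{kq}$ is noncritical. I would first observe that the cycle then automatically avoids both the poles of $G$ (because $G(z_{kq})=z_{(k+1)q}\neq\infty$) and the hole set $G^{-1}(\hat z)=\{\hat z\}\cup\Lambda$ of the degeneration $f_t^q\to HG$ (using the hypotheses $\Gamma\cap\Lambda=\emptyset$ and $z_{kq}\neq\hat z$). This simultaneous disjointness from holes and poles is the crux: it is what lets one pull derivative convergence through the degeneration without the pathology flagged in the warning paragraph before the statement. On a small disk around each $z_{kq}$ the family $f_t^q$ converges locally uniformly to $G$, so by standard Cauchy estimates $(f_t^q)'(\mathbf{z}_{kq}(t))\to G'(z_{kq})$. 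The chain rule applied to $f_t^n=(f_t^q)^m$ then gives
\[
\lambda(t)\;=\;\prod_{k=0}^{m-1}(f_t^q)'(\mathbf{z}_{kq}(t))\;\longrightarrow\;\prod_{k=0}^{m-1}G'(z_{kq})\;=\;(G^m)'(z_0)\;=\;\mu^{m/m'},
\]
so $|\mu|^{m/m'}=|\lambda_0|\le 1$ and therefore $|\mu|\le 1$.

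The main obstacle is precisely the derivative-tracking issue flagged in the paragraph just before the lemma statement: derivatives of a degenerating family can pick up spurious holes at poles of the limit rational map, so one cannot in general take limits of $(f_t^q)'$ at arbitrary points. The preparatory step above---verifying that the limiting cycle hits neither the holes of $HG$ nor the poles of $G$---is exactly what defuses this obstacle and validates the termwise derivative limit used in the chain-rule computation.
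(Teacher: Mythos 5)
Your proposal is correct and follows essentially the same route as the paper's proof: exclude the superattracting case, verify that the limiting cycle avoids both the holes $\Lambda\cup\{\hat z\}$ and the poles of $G$ so that $(f_t^q)'(z_{kq}(t))\to G'(z_{kq})$, and then pass to the limit in the chain-rule factorization of the attracting multiplier. The only cosmetic differences are that you justify the derivative convergence by Cauchy estimates where the paper cites a lemma of DeMarco, and you are slightly more careful in allowing the period of the $G$-cycle to be a proper divisor of $n/q$.
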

\begin{proof}
By Lemma \ref{limit-infinity} (2), $\Gamma\setminus\{\hat z\}$ is a cycle of $G$. We show this cycle is nonrepelling. If $\infty\in\Gamma\setminus\{\hat z\}$, then $\Gamma\setminus\{\hat z\}$ is superattracting since $\infty$ is a critical point of $G$. Now we deal with the case that $\infty\not\in\Gamma\setminus\{\hat z\}$. Since $f_t^q$ converges to $G$ locally uniformly on $\mathbb{P}^1$ off finitely many points, there exists a homogeneous polynomial $H$ such that $f_t^q\to HG$ in $\mathbb{P}^{2d^q+1}$. It is easy to check that $(f_t^q)'\to H^2G'$ in $\mathbb{P}^{4d^{q}+1}$. Thus the holes of $H^2G'$ are contained in $\Lambda\cup\{\hat z\}$ and the poles of $G$.
%
%
%
For any point $x\in\Gamma\setminus\{\hat z\}$, let $x(t)\in\langle z(t)\rangle$ such that $x(t)$ converges to $x$ as $t\to 0$. Since $\Gamma\cap\Lambda=\emptyset$ and $\infty\not\in\Gamma\setminus\{\hat z\}$, then $x$ is not a hole of $H^2G'$. Now regard $(f_t^q)'$ and $G'$ as rational maps in lowest terms. Then $(f_t^q)'(x(t))\to G'(x)$ as $t\to 0$, see \cite[Lemma 2.6]{DeMarco07}. Now renumber the points in $\langle z(t)\rangle$ such that $z_0(t)\to z_0\in\Gamma\setminus\{\hat z\}$. Then by Lemma \ref{limit-infinity} and the chain rule,
$$(f_t^{n})'(z_0(t))=(f_t^q)'(z_{n/q-1}(t))\cdots(f_t^q)'(z_{q}(t))(f_t^q)'(z_{0}(t)).$$
It follows that $(f_t^{n})'(z_0(t))\to (G^{n/q})'(z_0)$. Since $|(f_t^{n})'(z_0(t))|<1$, we have that $|(G^{n/q})'(z_0)|\le 1$. Note $G^{n/q}(z_0)=z_0$ since $f_t^n(z_0(t))=z_0(t)$. Thus the cycle $\Gamma\setminus\{\hat z\}$ is nonrepelling.
\end{proof}

\subsection{Limits of a pair of attracting cycles}

We continue the setup and notation of the previous two subsections.  However, from now on, we assume in addition that the family $f_t$ has a degenerate sequence $f_{t_k}$ possessing two distinct \emph{attracting} cycles of periods at least $2$.

Lemma \ref{limit-infinity} gives three possibilities for the limit of each these cycles. With two indistinguishable cycles, we get a priori six possibilities to analyze in total. The following result constrains the limiting map $G(z)$ in certain cases, and serves to rule out some possibilities.

\begin{lemma}\label{infinity-infinity}
Let $\{f_t\}$ and $\{f_{t_k}\}$ be as above. Assume that $\langle z(t_k)\rangle$ and $\langle w(t_k)\rangle$ are two distinct attracting cycles of periods at least $2$. Suppose $\langle z(t_k)\rangle\to\{\hat z\}$ as $k \to \infty$. Then $\langle w(t_k)\rangle\to\{\hat z\}$ also. Moreover, both $\langle z(t_k)\rangle$ and $\langle w(t_k)\rangle$ have period $q$, and the point $\hat{z}$ is a parabolic fixed point $G(z)$ with multiplicity $3$.
\end{lemma}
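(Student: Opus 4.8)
The plan is to push the derivative-tracking argument of Lemma~\ref{nonrepelling-cycle} to its logical extreme in the collapsing case. First I would record the consequences of the hypothesis $\langle z(t_k)\rangle \to \{\hat z\}$: by Lemma~\ref{limit-infinity}, this is exactly case~(3), so $q \mid n$ where $n$ is the period of $\langle z(t_k)\rangle$, and in particular $\langle \mathbf{z}\rangle \subset U$ by Proposition~\ref{repelling}(1). Now suppose for contradiction that $\langle w(t_k)\rangle$ does \emph{not} also collapse to $\{\hat z\}$. Then for $\langle w(t_k)\rangle$ we are in case~(1) or case~(2) of Lemma~\ref{limit-infinity}. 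Case~(1) produces a second type~II repelling cycle $\mathcal{O}'$ and a critical point $c' \in \{0,\infty\}$ with $G^\ell(c')=\hat z$; but since $\langle z(t_k)\rangle$ collapses with $\langle\mathbf{z}\rangle\subset U$, the Rivera domain $U$ already absorbs a critical point attracted to $\hat z$ (Proposition~\ref{repelling}(3): $\hat z$ is a multiple fixed point of $G$, so some $c\in\{0,\infty\}$ is attracted to it), leaving only $c'$ to land on $\hat z$ under iteration — I would need to check this is compatible, and if so fall back on the Fatou--Shishikura count. In case~(2), Lemma~\ref{nonrepelling-cycle} gives that $\Gamma_w\setminus\{\hat z\}$ is a nonrepelling cycle of $G$ of period $n'/q$; combined with the parabolic fixed point $\hat z$ and the fact that $G\in\mathcal F$ is bicritical, the refined Fatou--Shishikura inequality (Theorems~\ref{FSI} and~\ref{refined-FSI}) bounds the number of non-repelling cycles of $G$, forcing a contradiction — this is where I expect to lean on the arithmetic input. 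So both cycles collapse to $\{\hat z\}$, hence both have period dividing $n$ and, by symmetry of the argument, we may take both periods equal to $q$ once we show the multiplicity claim.

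The heart of the matter is the multiplicity~$3$ assertion, and here is how I would get it. Since $\langle z(t)\rangle \subset U$ converges to $\hat z$ and the return map $G = T_{\bg}\mathbf f^q$ has a parabolic fixed point at $\hat z$ with $G'(\hat z)=1$, the cycle $\langle z(t_k)\rangle$ of $f_{t_k}^q$ consists of points converging to $\hat z$ inside a parabolic basin of $G$. The local dynamics of a parabolic point with multiplier $1$ and multiplicity $m$ (i.e. $G(w) = w + a(w-\hat z)^m + \cdots$ in a coordinate fixing $\hat z$) has $m-1$ attracting petals. An attracting cycle of $f_{t_k}^q$ of period $n/q \geq 1$ collapsing to $\hat z$ as $t_k\to 0$ must, by a standard perturbation-of-parabolic analysis (as in Douady--Hubbard / the implosion picture), be created by the splitting of $\hat z$; the key point is that to house \emph{two} distinct attracting cycles simultaneously collapsing to $\hat z$, the parabolic point must shed enough periodic points, which forces $m-1 \geq 2$, i.e. $m \geq 3$. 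Conversely $m \leq 3$ because $G$ is bicritical: a parabolic fixed point of multiplicity $m$ absorbs a cycle of $m-1$ critical orbits' worth of "parabolic immediate basin" components, and with only two critical points $0,\infty$ available — one of which may be needed elsewhere — the Fatou--Shishikura inequality caps $m-1 \leq 2$. Reconciling these two bounds yields $m = 3$ exactly, and the fact that both collapsing cycles have period precisely $q$ (not a proper multiple) then follows because $m-1 = 2$ petals can each carry exactly one cycle of the first-return map $G$, which is the $q$-fold iterate, so the $f_{t_k}$-periods are $q$.

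The step I expect to be the main obstacle is making the perturbed-parabolic bookkeeping rigorous in the Berkovich/degenerating-family language rather than by hand-estimates — specifically, relating the multiplicity $m$ of $\hat z$ as a fixed point of $G$ to the number of distinct attracting cycles of $f_{t_k}^q$ that can collapse to it. I would handle the lower bound $m\geq 3$ by contradiction: if $m=2$, there is a single attracting petal, the immediate parabolic basin of $\hat z$ for $G$ has one cycle of components, and I would argue that at most one attracting cycle of $f_{t_k}^q$ can collapse into a single petal (two such cycles would have to be separated by a repelling point that itself converges to $\hat z$, contradicting the structure of the petal); and $m=1$ is excluded outright since then $\hat z$ is not even parabolic-degenerate and no cycle can collapse to it. The upper bound $m \leq 3$ I would derive from Epstein's refined Fatou--Shishikura inequality applied to $G$, counting the parabolic basin of $\hat z$ (contributing $m-1$) against the two available critical points, where the refinement is essential because the naive inequality is not tight enough. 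Once both bounds are in place, $m=3$ and the period assertions follow, completing the proof.
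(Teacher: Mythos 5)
Your upper bound $m\le 3$ (each of the $m-1$ attracting petals of $\hat z$ must capture a critical point of the bicritical map $G$) agrees with the paper's. The gap is in the lower bound $m\ge 3$. You propose to obtain it from a ``standard perturbation-of-parabolic'' count of how many attracting cycles of $f_{t_k}^q$ can collapse to $\hat z$. But $\hat z$ is itself a hole of the limit $f_t^q\to H\cdot G$: the holes are exactly $G^{-1}(\hat z)=\{\hat z\}\cup\Lambda$, and $\hat z$ is fixed. Hence $f_{t_k}^q$ does \emph{not} converge locally uniformly on any neighborhood of $\hat z$, and the classical petal/Rouch\'{e} bookkeeping relating the multiplicity of $\hat z$ as a fixed point of $G$ to the periodic points of $f_{t_k}^q$ accumulating there does not transfer. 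This is exactly the situation the paper's arithmetic machinery is built for: it applies Rivera-Letelier's fixed-point formula (Theorem \ref{fp-number}) to $\mathbf{f}^n$ on the Rivera domain $U$, getting $N(\mathbf{f}^n,U)=2+q(m-2)$ on one side and $N(\mathbf{f}^n,U)\ge 2+n$ on the other (the two fixed points of $\mathbf{f}$ in $U$ plus the $n$ points of $\langle\mathbf{z}\rangle\subset U$), whence $q(m-2)\ge n\ge 2$ and $m\ge 3$ \emph{using only the single cycle} $\langle z(t_k)\rangle$ --- no second cycle and no Archimedean perturbation theory at $\hat z$ is needed.

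Relatedly, your ordering (first show $\langle w(t_k)\rangle$ collapses, then deduce $m=3$ from both collapses) makes the first step underpowered. Once $m=3$ is known, both critical points of $G$ have infinite forward orbits converging to $\hat z$, so the limit of $\langle w(t_k)\rangle$ can contain neither a preperiodic critical point (case (1) of Lemma \ref{limit-infinity}) nor a nonrepelling cycle disjoint from $\hat z$ (case (2), via the proof of Lemma \ref{nonrepelling-cycle} together with Fatou/Ma\~{n}\'{e}), and the collapse is immediate. Without $m=3$ in hand, your Fatou--Shishikura counts do not close: in your case (1), with only $m\ge 2$ guaranteed, one has $\delta(G)\le 1$ (the orbit of $c'$ is finite since it lands on $\hat z$) against $\gamma(G)\ge\gamma_{\hat z}\ge 1$, which is consistent unless you can show $\hat z$ is parabolic-attracting or parabolic-indifferent --- and that again requires uniform control of $f_{t_k}^q$ near the hole $\hat z$; your case (2) similarly gives $\gamma(G)\ge 2$ against $\delta(G)\le 2$. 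I recommend restructuring: prove $m=3$ (and hence $n=q$) first via Theorem \ref{fp-number}, then deduce the collapse of the second cycle.
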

\begin{proof}
For the unique fixed Rivera domain $U$ of $\mathbf{f}$, Theorem \ref{fp-number} implies that $\mathbf{f}$ has $2$ fixed points in $U\cap\mathbb{P}^1_{\mathbb{L}}$. Let $n$ be the period of the cycle $\langle z(t_k)\rangle$. Now consider the number $N(\mathbf{f}^{n},U)$ of the fixed points of $\mathbf{f}^{n}$ in $U\cap\mathbb{P}^1_{\mathbb{L}}$. By Theorem \ref{fp-number}, we have on the one hand
$$N(\mathbf{f}^{n},U)=2+q(m-2),$$
where $m$ is the multiplicity of $\hat z\in\mathbb{C}\setminus\{0\}$ as a fixed point of $G(z)$. Since  $\langle z(t_k)\rangle\to\{\hat z\}$, it follows that $\langle\mathbf{z}\rangle\subset U$. For otherwise, Proposition \ref{repelling}(2) implies there are points distinct from $\hat z$ in the limit of $\langle z(t_k)\rangle$. Note each point in $\langle\mathbf{z}\rangle$ is a fixed point of $\mathbf{f^n}$. So we have on the other hand that $N(\mathbf{f}^{n},U)\ge 2+n$. Hence $m\ge 3$ and so the parabolic fixed point $\hat z$ attracts in its basin at least two critical points of $G$.
Since $G$ has exactly two critical points, it follows that $m=3$, equivalently $n=q$.

We now consider the three possibilities for the limit of the cycle $\langle w(t_k) \rangle$ given by Lemma \ref{limit-infinity}.  The previous paragraph shows that each critical point of $G$ has infinite forward orbit and converges to the multiplicity $3$ parabolic fixed-point $\hat{z}$. If the limit contains a cycle, it must be non-repelling, by the proof of Lemma \ref{nonrepelling-cycle}.  By \cite{Fatou, Mane93}, to a non-repelling cycle is associated yet another critical point of $G$, and so this is impossible; to see this, consider each of the attracting, parabolic, Siegel, and Cremer cases. The limit cannot contain a preperiodic critical point of $G$ either. Hence this limit must collapse to $\hat{z}$.

We conclude that both cycles collapse and hence collide to $\hat{z}$.
\end{proof}


\subsection{Fatou-Shishikura Inequality}
To finally prove our main result, we will apply the Fatou-Shishikura inequality (FSI) and its  version refined by A. Epstein (refined FSI) to obtain impossible on constraints the number of nonrepelling cycles of the limiting map $G$.

\begin{theorem}[FSI]\cite[Corollary 1]{Shishikura87}\label{FSI}
Let $f \in \mathrm{Rat}_d$, $d \geq 2$. Then $f$ has at most $2d-2$ nonrepelling cycles.
\end{theorem}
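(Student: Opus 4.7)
The plan is to follow Shishikura's quasi-conformal surgery strategy: assign to each non-repelling cycle of $f$ a distinct critical point, thereby bounding the number of such cycles by $2d-2$, the total number of critical points counted with multiplicity. To set up the bookkeeping, split the non-repelling cycles by their multiplier $\lambda$ into attracting ($0<|\lambda|<1$), superattracting ($\lambda=0$), parabolic ($\lambda$ a root of unity), linearizable irrational (Siegel disks and Herman rings), and non-linearizable irrational (Cremer) types.

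For the attracting, superattracting, and parabolic types, a classical theorem of Fatou furnishes a critical point in the immediate basin of each cycle; since distinct cycles have pairwise disjoint immediate basins, this already assigns distinct critical points to them. The remaining cycles need surgery. For each Siegel cycle I would use a quasi-conformal deformation supported in the union of its rotation disks to push the rotation number from a Brjuno irrational to a rational approximant, then solve the resulting Beltrami equation via the measurable Riemann mapping theorem to produce a new map $\tilde f\in\mathrm{Rat}_d$ of the same degree in which this cycle has become parabolic, and hence absorbs a critical point in its immediate basin. Herman rings are handled analogously, modelling the rotation annulus by a Blaschke product and replacing the rotation by an attracting fixed-point structure after surgery. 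For a Cremer cycle there is no invariant linearization domain to surger, so I would either pass to a continuity limit of surgeries performed on nearby Siegel cycles, or invoke Ma\~{n}\'{e}'s theorem that the $\omega$-limit set of some critical point accumulates on every Cremer periodic point. Performing these deformations inductively across all Siegel, Herman, and Cremer cycles yields a degree $d$ rational map in which every originally non-repelling cycle has become attracting or parabolic, at which point Fatou's immediate-basin accounting gives the bound $2d-2$.

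The main obstacle is the bookkeeping of the surgery. I must verify (i) that the Beltrami form implementing each deformation is supported in a region where it is dynamically invariant, so that the measurable Riemann mapping theorem does conjugate $f$ to a rational map of degree exactly $d$; (ii) that no non-repelling cycle is accidentally created, destroyed, or merged; and (iii) that the critical points assigned to distinct cycles remain distinct after the deformation. The most delicate point is the Cremer case, where no direct surgery is available: one must refine Ma\~{n}\'{e}'s theorem sufficiently to show that the recurrent critical orbit accumulating on a Cremer cycle can be charged uniquely to that cycle, so that it is not double-counted against any other non-repelling cycle. Getting this exclusivity statement, and marshalling it together with the parallel surgeries at the other rotation domains, is where the real work of the proof lies.
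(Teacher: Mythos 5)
The paper does not prove this statement; it is quoted verbatim from Shishikura \cite[Corollary 1]{Shishikura87}, so there is no internal proof to compare against. Judged on its own terms, your sketch correctly identifies the overall strategy (quasiconformal surgery to reduce everything to Fatou's count of critical points in immediate basins), but it has two genuine gaps. First, the Siegel step as you describe it cannot work: a quasiconformal \emph{deformation} (an invariant Beltrami form straightened by the measurable Riemann mapping theorem) produces a map topologically conjugate to $f$, and by Naishul's theorem the multiplier of an indifferent periodic point is a topological conjugacy invariant, so you cannot ``push the rotation number to a rational approximant'' this way. What is needed is a genuine cut-and-paste surgery -- replace $f$ on an invariant sub-disk of the Siegel disk by a model map with an \emph{attracting} (not parabolic) fixed point, check that the resulting quasiregular map carries an invariant Beltrami form of bounded dilatation, and straighten; the other non-repelling cycles survive because the straightening is conformal off the grand orbit of the modified region. (Herman rings, incidentally, contain no periodic points, so they do not belong in your classification of non-repelling cycles and are irrelevant to this corollary; they enter only the sharper form of the inequality.)

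Second, and more seriously, the Cremer case -- which is the entire content of Shishikura's theorem beyond Fatou's classical count -- is left unresolved. Your first fallback (``a continuity limit of surgeries performed on nearby Siegel cycles'') has no meaning: there are no nearby Siegel cycles to surger, and non-repelling cycles do not vary continuously into Siegel ones. Your second fallback (Ma\~{n}\'{e}'s theorem) is a legitimate alternative route in spirit, but as you yourself note, the bare statement that some recurrent critical orbit accumulates on each Cremer point does not give the injective assignment of critical points to cycles, and you offer no way to get it; moreover this inverts the historical logic, since Ma\~{n}\'{e}'s result postdates Shishikura's. Shishikura's actual treatment handles Siegel and Cremer points uniformly by a surgery supported in a small \emph{non-invariant} disk around the indifferent point: one post-composes with a quasiconformal twist that makes the multiplier attracting, and the key estimate -- the real work of the proof, absent from your sketch -- is that every orbit of the modified quasiregular map crosses the annulus supporting the distortion a uniformly bounded number of times (once inside, it is trapped in the new attracting basin), so the pulled-back Beltrami form has bounded dilatation and can be straightened without changing the degree or destroying the remaining non-repelling cycles.
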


\begin{theorem}[Refined FSI]\cite[Theorem 1]{Epstein99}\label{refined-FSI}
Let $f \in \mathrm{Rat}_d$, $d \geq 2$.  For a cycle $\langle z\rangle\subset\mathbb{P}^1$, define
$$\gamma_{\langle z\rangle}:=
\begin{cases}
0&\text{if}\  \langle z\rangle\ \text{is\  repelling\ or\ superattracting},\\
1&\text{if}\  \langle z\rangle\ \text{is\  attracting\ or\ irrationally indifferent},\\
\nu &\text{if}\  \langle z\rangle\ \text{is\  parabolic-repelling},\\
\nu+1& \text{if}\  \langle z\rangle\ \text{is\  parabolic-attracting\ or\ parabolic-indifferent},
\end{cases}$$
where $\nu$ is the corresponding degeneracy if $\langle z\rangle$ is parabolic. Set
$$\gamma(f)=\sum_{\langle z\rangle\subset\mathbb{P}^1}\gamma_{\langle z\rangle},$$
and let $\delta(f)$ be the number of infinite tails of critical orbits.
Then
$$\gamma(f)\le\delta(f).$$
\end{theorem}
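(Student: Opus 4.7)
The plan is to follow the standard quasiconformal-deformation route used by Shishikura for the classical inequality, refined to account for parabolic degeneracies as in Epstein's work. The strategy is to associate to each non-repelling cycle a precise number of independent infinitesimal deformations of $f$ inside $\mathrm{Rat}_d$ totaling exactly $\gamma_{\langle z\rangle}$ per cycle, show these collect into $\gamma(f)$ independent tangent vectors at $[f]$ in the moduli space $\mathrm{rat}_d$, and then inject this tangent data into the space of motions of critical grand orbits, whose effective dimension is $\delta(f)$.

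For the cycle-by-cycle construction I would proceed as follows. An attracting cycle that is not superattracting admits a one-parameter Beltrami surgery in its basin that perturbs the Koenigs multiplier while preserving the rest of the dynamics, giving weight $1$; irrationally indifferent cycles contribute similarly one parameter via perturbation of the rotation number. Superattracting and repelling cycles offer no free internal parameter, so weight $0$. The delicate case is parabolic: by Ecalle--Voronin theory, a parabolic cycle with degeneracy $\nu$ is morally the collision of $\nu+1$ cycles, so a generic deformation inside $\mathrm{Rat}_d$ splits it into that many nearby cycles, yielding $\nu$ independent ``splitting'' directions plus one extra direction when a parabolic-attracting or parabolic-indifferent basin persists under the perturbation. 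Globalizing each of these local deformations via the Measurable Riemann Mapping Theorem (and quotienting by the $3$-dimensional Möbius action) packages them into $\gamma(f)$ linearly independent tangent vectors at $[f]$.

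The main obstacle, and the heart of Epstein's refinement, is the transversality step that links $\gamma(f)$ to $\delta(f)$. I would set up a pairing between infinitesimal deformations of $f$ and meromorphic quadratic differentials on $\mathbb{P}^1$ whose allowed poles sit exactly at the non-repelling cycles with principal parts dictated by multipliers and parabolic data, and then read Cauchy's residue theorem on the sphere as a non-degeneracy statement for this pairing. The key computation is that any such quadratic differential pairing trivially with every critical-orbit perturbation must itself vanish; this is where fixed-point indices, parabolic residues, and Fatou coordinates all enter. Non-degeneracy then forces the $\gamma(f)$ cycle-deformations to push critical grand orbits in linearly independent directions, which yields $\gamma(f)\le\delta(f)$.

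I expect the hardest technical point to be verifying that the Ecalle-cylinder bookkeeping really delivers the refined weights $\nu$ versus $\nu+1$ in the parabolic-repelling versus parabolic-attracting/indifferent cases: one must identify precisely which perturbation directions preserve a parabolic phenomenon and which ``release'' an additional attracting or indifferent cycle, and ensure that these directions combine across cycles without hidden relations. Once that count is in place and the transversality pairing is shown non-degenerate, the inequality $\gamma(f)\le\delta(f)$ follows by a straightforward dimension comparison.
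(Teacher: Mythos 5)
Note first that the paper does not prove this statement at all: Theorem~\ref{refined-FSI} is quoted verbatim from Epstein \cite{Epstein99} and used as a black box, so there is no internal proof to compare against. Judged on its own, your proposal correctly identifies the two pillars of Epstein's argument --- a weighted count $\gamma(f)$ of ``deformation directions'' attached to the non-repelling cycles, and a duality with meromorphic quadratic differentials whose non-degeneracy bounds that count by the number $\delta(f)$ of infinite critical tails --- but it has a genuine gap in each pillar.

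First, the weights cannot be realized the way you describe. Quasiconformal surgery via the Measurable Riemann Mapping Theorem only produces deformations along $f$-invariant Beltrami forms, and a parabolic cycle of degeneracy $\nu$ does not support $\nu$ independent invariant Beltrami forms that ``split'' it: splitting a parabolic point is not a quasiconformal deformation of $f$, and the Ecalle--Voronin splitting directions are not tangent to the qc-deformation space. This is precisely why the classical surgery proof of Shishikura yields only the unrefined count, and why Epstein works dually: he attaches to each non-repelling cycle a local divisor of allowed poles for quadratic differentials (the pole orders at parabolic points encode $\nu$ and the formal invariants, and distinguish the parabolic-attracting/indifferent/repelling cases via the holomorphic index rather than via ``which perturbations preserve a parabolic phenomenon''), and then proves that the pushforward operator $\nabla_f=f_*-\mathrm{id}$ is injective on the resulting finite-dimensional space of quadratic differentials --- infinitesimal Thurston rigidity. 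Second, that injectivity is the entire content of the theorem, and your sketch only names it (``the key computation is that any such quadratic differential pairing trivially \ldots must vanish'') without offering an argument; the residue-theorem remark does not substitute for the contraction/expansion estimate on $f_*$ that Epstein actually uses. As it stands the proposal is an accurate map of the terrain, not a proof.
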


\subsection{Hyperbolic components of strict type D are bounded}
Recall that $\mathcal{M}_d$ is the moduli space of  bicritical rational maps  of degree $d\ge 2$. 
Theorem \ref{main} is a consequence of the following result.

\begin{theorem}\label{bounded-family}
Let $\mathcal{H}\subset\mathcal{M}_d$ be a hyperbolic component possessing two distinct attracting cycles of periods at least $2$. Let $\{f_t\}_{t \in \mathbb{D}}\subset\overline{\mathcal{F}}$ be a holomorphic family
such that $[f_{t_k}]\in\mathcal{H}$ for some sequence $t_k\to 0$.
Then $[f_t]$ is bounded in $\mathcal{M}_d$.
\end{theorem}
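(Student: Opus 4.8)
The plan is to argue by contradiction: suppose $[f_t]$ is unbounded in $\mathcal{M}_d$ along a subsequence, so the family $\{f_t\}$ is truly degenerate and the induced Berkovich map $\mathbf{f}$ is not simple. Since $\mathcal{H}$ has two distinct attracting cycles of periods $\geq 2$, the degenerating sequence $\{f_{t_k}\}$ carries two distinct attracting cycles $\langle z(t_k)\rangle$, $\langle w(t_k)\rangle$, each of period $\geq 2$ and each with bounded (in fact convergent) multiplier. Applying Lemma~\ref{repelling-existence} to either cycle produces a type II repelling cycle for $\mathbf{f}$; Lemma~\ref{lemma:coarse-structure} then supplies the fixed starlike Rivera domain $U$, its center $\mathbf{c}$, its boundary $q$-cycle $\partial U = \langle \bg = \xi_0,\ldots,\xi_{q-1}\rangle$, and the limiting nonlinear rational map $G = T_{\bg}\mathbf{f}^q \in \mathcal{F}$ with its multiple fixed point $\hat z$ and hole set $G^{-1}(\hat z) = \{\hat z\} \cup \Lambda$.

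Next I would run the case analysis. By Lemma~\ref{limit-infinity}, each of the two attracting cycles has a limit $\Gamma$ (respectively $\Gamma'$) landing in one of three configurations: (1) $\Gamma \cap \Lambda \neq \emptyset$, in which case $\mathbf{f}$ has a \emph{second} type II repelling cycle $\mathcal{O}'$, the set $\{0,\infty\}\cap\Gamma$ is a single critical point $c'$ of $G$, and $G^\ell(c')=\hat z$ for some $1 < \ell < n$; (2) $\Gamma\cap\Lambda=\emptyset$, $\Gamma\neq\{\hat z\}$, in which case $\Gamma\setminus\{\hat z\}$ is a nonrepelling cycle of $G$ (Lemma~\ref{nonrepelling-cycle}); (3) $\Gamma=\{\hat z\}$, in which case (Lemma~\ref{infinity-infinity}) the \emph{other} cycle also collapses to $\hat z$, both cycles have period $q$, and $\hat z$ is a parabolic fixed point of $G$ of multiplicity $3$. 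Because Proposition~\ref{repelling}(4) permits $\mathbf{f}$ to have \emph{at most two} repelling type II cycles, configuration (1) can occur for at most one of the two attracting cycles. So the six a priori combinations reduce to: both in case (3); one in (3) and one in (1) or (2); both in (2); one in (1) and one in (2); both in (1) — the last being excluded since it would force two distinct second repelling cycles. The case "both in (3)" and any case mixing (3) with (1) or (2) is already handled by Lemma~\ref{infinity-infinity}, which forces both to collapse.

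The crux is to convert each surviving configuration into a count of critical orbit behaviour that contradicts the Fatou–Shishikura inequality for $G \in \mathcal{F}$, a degree-$d$ \emph{bicritical} map with only the two critical points $0,\infty$. In every surviving case the multiple fixed point $\hat z$ of $G$ is parabolic with degeneracy $\nu \geq 1$, and by the hole analysis a critical point of $G$ is attracted to $\hat z$; this consumes one critical orbit and contributes $\gamma_{\langle \hat z\rangle} \geq \nu + 1 \geq 2$ to $\gamma(G)$. Meanwhile: in case (1) the limiting critical point $c'$ satisfies $G^\ell(c') = \hat z$, so $c'$ has \emph{finite} tail — it does not contribute to $\delta(G)$ — yet it is exactly the critical point \emph{not} attracted to $\hat z$, so $\delta(G) \leq 1$ while $\gamma(G) \geq 2$, violating Theorem~\ref{refined-FSI}; in case (2) the limit cycle $\Gamma\setminus\{\hat z\}$ is a nonrepelling cycle disjoint from $\hat z$, which by \cite{Fatou, Mane93} must absorb a critical point, but the only critical point not already attracted to $\hat z$ must be the one producing this cycle, and a nonrepelling (non-superattracting) cycle contributes a further $\gamma \geq 1$, again overrunning $\delta(G) \leq 1$ (the superattracting subcase, where $\infty$ itself is periodic for $G$, is handled by noting the cycle then gives $\delta$-tail finiteness and a contradiction with the required second critical point being attracted to $\hat z$). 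Assembling these, every surviving configuration contradicts Theorem~\ref{FSI} or Theorem~\ref{refined-FSI}, so $[f_t]$ cannot be unbounded, proving the theorem. I expect the main obstacle to be the careful bookkeeping in case (2) when the limit cycle is superattracting, i.e. when $\infty \in \Gamma\setminus\{\hat z\}$, since then the naive inequality $\gamma_{\langle z\rangle} = 0$ must be compensated by a more delicate argument about where the other critical orbit goes, and by the subtlety (flagged before Lemma~\ref{nonrepelling-cycle}) that $G'$ need not be in lowest terms, so that derivative limits along the degenerating cycle must be taken with care.
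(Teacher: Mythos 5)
Your skeleton (contradiction via true degeneration, rescaling limit $G=T_{\bg}\mathbf{f}^q$, case analysis on the limits $\Gamma^1,\Gamma^2$ of the two attracting cycles, FSI/refined FSI) matches the paper, but three of your steps do not actually close. The most serious is your exclusion of the configuration where \emph{both} cycles satisfy $\Gamma^i\cap\Lambda\neq\emptyset$ on the grounds that it ``would force two distinct second repelling cycles'': it does not, since the two cycles can (and in the paper's Case~2 provably do) share the single second repelling cycle $\mathcal{O}'$. Indeed the paper shows, using the refined FSI on $G$, that once $\Gamma^1\cap\Lambda\neq\emptyset$ one must also have $\Gamma^2\cap\Lambda\neq\emptyset$, and the contradiction in that case cannot be extracted from an FSI count on $G$ at all: with $c'$ preperiodic to $\hat z$ and the other critical point attracted to $\hat z$ one has $\delta(G)=1$ and $\gamma(G)=\gamma_{\langle\hat z\rangle}$, which equals $\nu=1$ if $\hat z$ is parabolic-repelling with one petal, so $\gamma(G)\le\delta(G)$ is perfectly consistent. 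Your inequality $\gamma_{\langle\hat z\rangle}\ge\nu+1$ presumes $\hat z$ is parabolic-attracting or parabolic-indifferent; having a critical point attracted into a petal is automatic for any parabolic point and does not establish this. The paper instead passes to the \emph{second} rescaling limit at $\xi'\in\mathcal{O}'$, where the first-return map is a degree~$d$ unicritical polynomial; both attracting cycles limit (by Lemma~\ref{cycle-period}) to nonrepelling cycles of that polynomial, the FSI forces these to coincide, and the collision of two distinct attracting cycles produces a parabolic-attracting or parabolic-indifferent cycle by \cite[Lemma 2.5]{Nie-P} and \cite[Lemma 1]{Epstein00}, contradicting the refined FSI for a unicritical polynomial. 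You never invoke these collision lemmas, and without them your Case~(2) count also fails: there $\delta(G)=2$ (the critical point feeding an attracting or irrationally indifferent limit cycle has an infinite tail), so a single extra nonrepelling cycle plus the parabolic $\hat z$ gives $\gamma(G)\le 2\le\delta(G)$ and no contradiction; the paper needs both $\Gamma^1\setminus\{\hat z\}$ and $\Gamma^2\setminus\{\hat z\}$ to be nonrepelling cycles, forces them to merge, and only then overruns the bound via the collision lemma.

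Finally, the collapse case $\Gamma^1=\{\hat z\}$ is not ``already handled by Lemma~\ref{infinity-infinity}'': that lemma only forces $\Gamma^2=\{\hat z\}$, both periods equal to $q$, and multiplicity $3$ for $\hat z$, which by itself is consistent with the refined FSI ($\nu=2$, $\delta(G)=2$). The contradiction there comes from the Rivera-domain fixed-point count of Theorem~\ref{fp-number}: both cycles lie in $U$, so $N(\mathbf{f}^q,U)\ge 2+2q$, whereas the multiplicity-$3$ parabolic point gives $N(\mathbf{f}^q,U)=2+q$, impossible for $q\ge 2$. This arithmetic step is absent from your proposal. In short, the approach is right in outline but the decisive mechanisms --- the second rescaling limit in the $\Gamma\cap\Lambda\neq\emptyset$ case, the cycle-collision lemmas, and the $N(\mathbf{f}^q,U)$ count --- are missing, and the FSI bookkeeping as written does not produce the claimed contradictions.
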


\begin{proof}
Suppose that $[f_t]\to\infty$ in $\mathcal{M}_d$ as $t\to\infty$. Then the induced map $\mathbf{f}$ is not simple. Let $\langle z(t_k)\rangle$ and $\langle w(t_k)\rangle$ be the two distinct attracting cycles of periods at least $2$ of $f_{t_k}$. Assume that $\langle z(t)\rangle\to\Gamma^1$ and $\langle w(t)\rangle\to\Gamma^2$ as $t \to 0$. Then by Lemma \ref{limit-infinity}, $\hat z\in\Gamma^1\cap\Gamma^2$.

Recall that $U$ denotes the unique fixed Rivera domain of $\mathbf{f}$, and $q\ge 2$  the period of a point in $\partial U$.  Lemma \ref{limit-infinity} gives three possibilities for $\Gamma^1$, which we treat in turn.

\textbf{Case 1: $\Gamma^1=\{\hat z\}$.} Then by Lemma \ref{infinity-infinity}, we have $\Gamma^2=\{\hat z\}$. Moreover, both $\langle z(t)\rangle$ and $\langle w(t)\rangle$ have period $q$ and the cycles $\langle\mathbf{z}\rangle$ and $\langle\mathbf{w}\rangle$ are in the Rivera domain $U$. It follows that
$$N(\mathbf{f}^q,U)\ge 2+2q.$$
On the other hand, in this case again by Lemma \ref{infinity-infinity} the point $\hat z\in\mathbb{C}\setminus\{0\}$ is a parabolic fixed point $G(z)$ with multiplicity $3$. Hence, by Theorem \ref{fp-number},
$$N(\mathbf{f}^q,U)=2+q.$$
It is impossible since $q\ge 2$.

\textbf{Case 2: $\Gamma^1\cap\Lambda\not=\emptyset$.} By Proposition \ref{repelling}(4) and Lemma \ref{limit-infinity}, the map $\mathbf{f}$ has a type II repelling cycle $\mathcal{O}'$ of period $q'>q$ and there exists $c'\in\{0,\infty\}$ such that $c'\in\Gamma^1$ and is preperiodic under $G$. Since $G$ also has a parabolic fixed point at $\hat{z}\in\mathbb{C}\setminus\{0\}$, Lemmas \ref{limit-infinity},  \ref{infinity-infinity} and the refined FSI imply that $\Gamma^2\cap\Lambda\not=\emptyset$ and $c'\in\Gamma^2$ too.

By Proposition \ref{repelling}(4), at the point $\xi'\in\mathcal{O'}\cap[0,\infty]$, the map $T_{\xi'}\mathbf{f}^{q'}$ is conjugate to a degree $d$ unicritical polynomial.
Now choose a suitable holomorphic family $M_t$ of degree $1$ rational map such that for the induced map $\mathbf{M}\in\mathrm{Aut}(\mathbb{P}^1_{\mathbb{L}})$, $\mathbf{M}(\xi')=\bg$ and the map $T_{\bg}(\mathbf{M}\circ\mathbf{f}^{q'}\circ\mathbf{M}^{-1})$ is a unicritical polynomial. Recall that there exist exactly two repelling cycles in $J(\mathbf{f})$.  Since $\xi'\in\mathcal{O'}$, by Lemma \ref{cycle-period}, renumbering the points in $\langle\mathbf{z}\rangle$ and $\langle\mathbf{w}\rangle$, we obtain $\mathbf{z}_0\in\langle \mathbf{z}\rangle$ and $\mathbf{w}_0\in\langle \mathbf{w}\rangle$ such that the boundaries of the corresponding periodic Berkovich Fatou components $\Omega(\mathbf{z}_0)$ and $\Omega(\mathbf{w}_0)$ are the point $\xi'$. Note $\{\mathbf{M}(\mathbf{z}_{kq'})\}_{k\ge 0}$ and $\{\mathbf{M}(\mathbf{w}_{kq'})\}_{k\ge 0}$ are two nonrepelling cycles of $\mathbf{M}\circ\mathbf{f}^{q'}\circ\mathbf{M}^{-1}$ in $\mathbb{L}$ and hence in $\mathbf{P}^1$. Lemma \ref{cycle-period} implies that $\{M_t(z_{kq'}(t))\}$ and $\{M_t(w_{kq'}(t))\}$ converge to two (not necessarily distinct) cycles in $\mathbb{C}$
of the corresponding polynomial. Moreover, noting the holes of the limit of $M_t\circ f_t^{q'}\circ M_t^{-1}$ are at $\infty$ and applying the proof of Lemma \ref{nonrepelling-cycle}, we have that these cycles are nonrepelling. The FSI implies that a unicritical polynomial has at most one nonrepelling cycle in $\mathbb{C}$. Hence $\{M_t(z_{kq'}(t))\}$ and $\{M_t(w_{kq'}(t))\}$ converge to the same cycle. We next appeal to two elementary lemmas from complex analysis regarding limits of cycles under a locally uniformly convergent sequence of maps. By \cite[Lemma 2.5]{Nie-P} and \cite[Lemma 1]{Epstein00}, we know this cycle is parabolic-attracting or parabolic-indifferent. The refined FSI implies that a unicritical polynomial cannot have such a cycle.

\textbf{Case 3: $\Gamma^1\cap\Lambda=\emptyset$ and $\Gamma^1\not=\{\hat z\}$.} By Lemma \ref{nonrepelling-cycle}, $\Gamma^1\setminus\{\hat z\}$ is a nonrepelling cycle.  By symmetry, it only remains to consider the case that $\Gamma^2\setminus\{\hat z\}$ is also a nonrepelling cycle. Note $G(z)$ has a parabolic fixed point. The FSI implies these two nonrepelling cycles collide, and hence they are same, which is again a parabolic-attracting or parabolic-indifferent cycle of $G$. Again, it contradicts to the refined FSI.
\end{proof}

\bibliographystyle{siam}
\bibliography{references}
\end{document}